\documentclass[11pt,reqno]{amsart}
\usepackage{graphicx}
\usepackage{color}
\usepackage{mathrsfs,amsmath,amssymb,amsthm,amsfonts}

\usepackage{graphics,url,color,epsfig}
\usepackage{tikz}
\usepackage[colorlinks]{hyperref}
\AtBeginDocument{%
  \hypersetup{%
    linkcolor=blue,%
    citecolor=blue,%
  }%
}
\usepackage{cleveref}
\usepackage{pgfplots}
\usepackage{multicol}

\makeatletter

\newcommand{\Rmnum}[1]{\expandafter\@slowromancap\romannumeral #1@}
\makeatother

\newtheorem{thm}{Theorem}[section]

\newtheorem{lemma}[thm]{Lemma}
\newtheorem{remark}[thm]{Remark}
\newtheorem{theorem}[thm]{Theorem}

\usepackage[numbers,sort&compress]{natbib}
\newcommand{\lap}{\mbox{$\Delta$}}
\newcommand{\la}{\lambda}
\newcommand{\be}{\begin{equation}}
\newcommand{\ee}{\end{equation}}

\everymath{\displaystyle}

\allowdisplaybreaks

\begin{document}

\author{Pengyan Wang}
\address{Pengyan Wang \newline\indent School of Mathematics and Statistics \newline\indent  Xinyang Normal University \newline\indent Xinyang, 464000, P. R. China}
\email{wangpy@xynu.edu.cn}

\author{Leyun Wu}
\address{Leyun Wu
\newline\indent
School of Mathematics \newline\indent South China University of Technology \newline\indent Guangzhou, 510640, P. R. China}
\email{leyunwu@scut.edu.cn}

\title{Liouville   theorems for  mixed local and  nonlocal  indefinite equations}

\begin{abstract}
We investigate the qualitative properties of positive solutions to mixed local-nonlocal equations with indefinite nonlinearities, emphasizing the interaction between classical and fractional Laplacians. We first establish maximum principles and prove strict monotonicity along the \(x_1\)-direction for mixed elliptic operators. By combining a mollified first eigenfunction with a suitable sub-solution, we derive nonexistence results for the mixed operator \( (-\Delta)^s - \Delta \) via a contradiction argument. These results are further extended to the parabolic setting, incorporating both the Marchaud-type fractional time derivative and the classical first-order derivative, revealing new qualitative features under dual nonlocality.

A key aspect of our approach is a careful adaptation of the method of moving planes to the mixed local--nonlocal context. By addressing the distinct scaling behaviors of local and nonlocal terms, the method yields monotonicity and Liouville-type results without standard decay assumptions, and provides a framework potentially applicable to a broader class of mixed elliptic and parabolic problems.
\end{abstract}



\subjclass[2010]{35K55, 35R11, 35B53}

\keywords{Mixed local and  nonlocal  operators;  Liouville theorem; Monotonicity;
The method of moving planes}

\maketitle

\numberwithin{equation}{section}

\section{Introduction and main results}

It is worth noting that interest in problems involving mixed operators has grown significantly in recent years. These operators arise from the interplay between two stochastic processes operating at different spatial scales: the classical random walk and the L\'{e}vy flight. When a particle follows either of these processes with certain probabilities, the resulting limiting diffusion behavior is governed by an operator of mixed order. A detailed analysis of this phenomenon and its implications can be found in \cite{DPLV1}.
This class of operators offers a flexible framework for exploring the contrasting roles of local and nonlocal diffusion mechanisms in various applied settings. For example, they have been used to model how different levels of regional and global movement restrictions might affect the spread of infectious diseases (see \cite{EGYMWB}).
In the context of free-ranging animals, empirical evidence of shifts between L\'{e}vy and Brownian motion patterns indicates that animals adaptively adjust their movement strategies in response to heterogeneous distributions of environmental resources (see \cite{HQD}). Classical applications of such mixed dynamics also include heat transport in magnetized plasmas, as discussed in \cite{Bd2013}.

The objective of this paper is to explore the monotonicity and the Liouville theorems for mixed local and nonlocal equations with indefinite nonlinearities. 

First, we concern the follwing  mixed local and nonlocal elliptic equations with indefinite nonlinearities:
\begin{equation}\label{v1}
(-\lap)^su(x)-\lap u(x)=f(x_1)g(u(x)),  ~x \in \mathbb R^n,
\end{equation}
 As is well
 known the fractional Laplacian is defined, up to a normalization constant, by a singular integral
\begin{eqnarray*}
\begin{aligned}
(-\Delta)^s u(x)&=C_{n, s}P.V. \int_{\mathbb{R}^n}\frac{u(x)-u(y)}{|x-y|^{n+2s}}dy\\
&=C_{n, s} \lim_{\varepsilon \to 0}\int_{\mathbb{R}^n\backslash B_\varepsilon(x)}\frac{u(x)-u(y)}{|x-y|^{n+2s}}dy,
\end{aligned}
\end{eqnarray*}
where $s\in (0, 1),$ $P.V.$
stands for the Cauchy principal value, $B_{\varepsilon}(x)$ is the ball of radius $\varepsilon$ centered at $x,$
and $C_{n, s}$ is a
dimensional constant depending on $n$ and $s$, precisely given by
$$
C_{n, s}=
=\frac{ 4^s s\Gamma(\frac{n+2s}{2})}{\pi^{\frac{n}{2}}\Gamma(1-s)},
$$
where $\Gamma$ denotes the Gamma function. 
In order to ensure the well-definition of mixed operator, it is necessary to have $u\in C^2 (\mathbb{R}^n) \cap \mathcal{L}_{2s}$ with the space $\mathcal{L}_{2s}$ defined as 
$$
\mathcal{L}_{2s}:=\left\{ u: \mathbb{R}^n \to \mathbb{R} \mid
\int_{\mathbb{R}^n}\frac{|u(x)|}{1+|x|^{n+2s}}dx<+\infty\right\}.
$$

Moreover, we also study the following indefinite  mixed local and  nonlocal parabolic equations involving the nonlocal time derivative:
\begin{eqnarray}\label{eq1}
  {\partial^\alpha_ t}u(x,t) +(-\Delta)^s u(x,t)-\lap u(x,t)= f(x_1) g(u(x, t)),\,\, (x, t) \in  \mathbb{R}^n \times \mathbb{R},
\end{eqnarray}
where  $ {\partial^\alpha_ t}$ is known as the Marchaud fractional derivative of order $\alpha$, defined by
$$ {\partial^\alpha_ t}u(x,t) =C_\alpha \int_{-\infty}^t \frac{u(x,t)-u(x,\tau)}{(t-\tau)^{1+\alpha}}d\tau,$$
where $ \alpha \in(0,1)$ and $C_\alpha =\frac{\alpha}{\Gamma(1-\alpha)}$. The Marchaud fractional derivative captures the irreversibility of the transport process in time and its inherent memory effect, and it reduces to the classical first-order time derivative  $ \partial_t u $  when $\alpha=1$.
In the parabolic setting, $(-\Delta)^s u(x, t)$  is defined in the same way as in the elliptic case. For each fixed $t>0$,
\begin{equation*}\aligned
(-\lap)^s u(x,t)&=C_{n,s} P.V. \int_{\mathbb R^n} \frac{u(x,t)-u(y,t)}{|x-y|^{n+2s}}dy\\
&= C_{n,s}  \lim\limits_{\epsilon\rightarrow 0}\int_{\mathbb R^n\setminus B_{\epsilon}(x)}\frac{u(x,t)-u(y,t)}{|x-y|^{n+2s}}dy.
\endaligned \end{equation*}
To ensure the well-definition of the left-hand side in  \eqref{eq1},
we assume 
$$
u(x,t)\in (C^2(\mathbb R^n)\cap {\tilde {\mathcal L}}_{2s})\times (C^1(\mathbb R)\cap {\mathcal L}^-_\alpha (\mathbb R)),
$$
where  
$$ {\tilde {\mathcal L}}_{2s}=\left\{u(\cdot, t) \in L^1_{loc} (\mathbb{R}^n) \mid \int_{\mathbb R^n} \frac{|u(x,t)|}{1+|x|^{n+2s}}dx<+\infty\right\},$$
and
$${\mathcal L}^-_{\alpha}(\mathbb R):=\left\{ u\in L^1_{loc}(\mathbb R ) \mid \int_{-\infty}^t \frac{|u(\tau)|}{1+|\tau|^{1+\alpha}}d\tau<+\infty~\mbox{for each }~t\in \mathbb R\right\}.$$

Liouville theorems are powerful tools for identifying when partial differential equations (PDEs) admit no nontrivial global solutions. They play a central role in establishing nonexistence, uniqueness, regularity, and classification results, and serve as fundamental instruments in both pure and applied analysis. For instance, they have been crucial in deriving a priori bounds for solutions in \cite{BPGQ, CLL2016, FLN, GS1}, and in proving uniqueness results in \cite{C, KK, MS}. For scaling-invariant superlinear parabolic equations and systems, Liouville theorems ensure optimal universal estimates for solutions to associated initial and initial-boundary value problems; see \cite{PQS, QS} and references therein. 

When considering the indefinite elliptic problem for Laplacian
\begin{eqnarray*}
    \begin{cases}
    -\Delta u=a(x) u^p &\mbox{in}\,\, \Omega,\\
    u=0& \mbox{on}\,\, \partial \Omega,
    \end{cases}
\end{eqnarray*}
where $\Omega$ is a smooth bounded domain in $\mathbb{R}^n$, and $a(x)$ is a sign-changing function  satisfying some certain conditions.  
To derive a priori bounds for solutions, one can perform a blow-up and rescaling procedure. In certain cases, this leads to the following limiting equation:
\begin{eqnarray*}
    -\Delta u=x_1 u^p &\mbox{in}\,\, \mathbb{R}^n.\\
\end{eqnarray*}
To reach a contradiction and thereby establish the a priori bounds, the Liouville theorem for this equation is required,  we refer the readers  to  \cite{BCN,DLi,Lin} and the references therein. For the fractional indefinite elliptic problem
\begin{eqnarray}\label{001}
    (-\Delta)^s u=x_1 u^p &\mbox{in}\,\, \mathbb{R}^n.
\end{eqnarray}
Chen-Zhu \cite{CZ} established a Liouville theorem for 
positive solutions to \eqref{001} in the case  $\frac{1}{2}<s<1$ by the extension method. Subsequently, Barrios et al. \cite{BPGQ2017}  extend this result to $0<s<1$  with more general nonlinearities.
At the same time,  Chen-Li-Zhu \cite{CLZ}  proved the Liouville theorem  of
positive solutions for \eqref{001} employing a direct method of moving planes
under significantly weaker conditions than that in \cite{BPGQ2017}.


  For   parabolic indefinite problems, Pol\'{a}\v{c}ik-Quittner \cite{PQ} studied a Liouville theorem nonnegative bounded solutions of the following parabolic equation
  \begin{equation*}
  \partial_ t u-\lap u(x,t)=a(x_1)f(u),~x=(x_1,x_2,\cdots,x_n)\in \mathbb R^n,~t\in \mathbb R,
  \end{equation*}
  where $a(x_1)$ is an increasing function tending to infinity when $x_1 \to +\infty$.
Chen-Wu-Wang \cite{CWW} investigated indefinite fractional parabolic problems and proved a Liouville theorem for positive solutions to
$$ \partial_t u(x,t)+(-\lap)^su(x,t)=x_1 u^p(x,t),~(x,t)\in \mathbb R^n \times \mathbb R.$$
Subsequently, Huang and Zhong \cite{HZ} established a Liouville theorem for tempered fractional parabolic equations with an indefinite term. Later, Chen and Guo \cite{CG} derived a Liouville theorem for indefinite dual fractional parabolic equations involving the Marchaud fractional time derivative.
For further related work on indefinite equations, see \cite{BCN,CL1997,CL1998,DLi,WYz,Zm} and the references therein.

Recently, the qualitative properties of solutions to problems involving mixed local and nonlocal operators have attracted significant attention. For example, De Filippis and Mingione \cite{DeM2024} studied the local H\"{o}lder continuity of the gradient of minimizers and provided the first boundary regularity results for such solutions. Anthal and Garain \cite{AG2025MZ} investigated symmetry, existence, nonexistence, and regularity of weak solutions to a mixed local and nonlocal singular jumping problem. Ding and Zhang \cite{DFZ2024JLMS} examined the local behavior of minimizers, including local boundedness, local Hölder continuity, and Harnack inequalities. Biagi and Vecchi \cite{BV2024CVPDE} explored the multiplicity of solutions to mixed local and nonlocal semilinear elliptic problems with singular and critical nonlinearities.

Motivated by the aforementioned works, the objective of this paper is to establish monotonicity and  Liouville theorems for positive solutions to mixed local and nonlocal equations  \eqref{v1} and \eqref{eq1}. To the best of our knowledge,  monotonicity and Liouville theorems for such equations have not yet been established in the existing literature. As a result, traditional methods that rely on constructing auxiliary functions to force decay of solutions at infinity are not applicable to indefinite problems involving mixed local and nonlocal operators.
In this paper, we adopt a direct method of moving planes, inspired by \cite{CLL, CLZ}, to address this problem. This method has been successfully applied to a wide range of equations, including the heat equation, the Allen–Cahn equation, and equations involving the fractional Laplacian (see \cite{CH2021JFA, CWNH, CWW, DQ2018AM, DW2024MZ}, among others).

Before stating our main results, 
we define a classical solution to equation \eqref{v1} as a function $u(x)\in C^2(\mathbb R^n)\cap {\mathcal L}_{2s} $ that  satisfies \eqref{v1}
pointwise in  $\mathbb R^n$. Simililarly, 
a function $u(x, t)$ is said to be  classical solution of \eqref{eq1}, if $$u(x,t)\in (C^2(\mathbb R^n)\cap {\tilde{\mathcal L}}_{2s})\times ( C^1(\mathbb R)\cap {\mathcal L}^-_\alpha (\mathbb R) ) $$ and satisfies \eqref{eq1}
pointwise in  $\mathbb R^n \times \mathbb R$.

Our first result establishes the monotonicity of positive bounded solutions to the mixed local and nonlocal indefinite  elliptic equation \eqref{v1}.

\begin{theorem}\label{mthm12} Let $u(x )\in  C^2(\mathbb R^n)\cap {\mathcal L}_{2s}  $ is a positive bounded classical solution of equation \eqref{v1}.
Suppose that $u$ is uniformly continuous and  the following conditions hold:

\textnormal{(F1):}   $f$ is strictly increasing in $x_1$ and
\begin{equation}\label{p100}
\underset{x_1\rightarrow-\infty}{\lim \sup}f(x_1)|x_1|^{2s} \leq 0.
\end{equation}

\textnormal{(F2):}  $g$ is locally Lipschitz and nondecreasing in $(0,+\infty)$. Moreover, $ g'(u)$ is continuous near $u=0$,
 \begin{equation}\label{www}
 g(0)=0,~ g'(0)=0~\mbox{and}~ g(u)> 0 ~\mbox{ in}~(0,+\infty) .
\end{equation}
 Then   $u(x )$ is strictly monotone increasing in the $x_1$ direction.

\end{theorem}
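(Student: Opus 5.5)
We will run the direct method of moving planes in the $x_1$ direction. For $\lambda\in\mathbb R$ let $T_\lambda=\{x_1=\lambda\}$, $\Sigma_\lambda=\{x_1<\lambda\}$, and $x^\lambda=(2\lambda-x_1,x_2,\dots,x_n)$. Set
$$
u_\lambda(x)=u(x^\lambda),\qquad w_\lambda(x)=u_\lambda(x)-u(x).
$$
Since $x_1^\lambda>x_1$ in $\Sigma_\lambda$ and $f$ is strictly increasing, we get
$$
(-\Delta)^s w_\lambda-\Delta w_\lambda=f(2\lambda-x_1)g(u_\lambda)-f(x_1)g(u)\ge f(x_1)\big(g(u_\lambda)-g(u)\big)+\big(f(2\lambda-x_1)-f(x_1)\big)g(u_\lambda),
$$
with the last term strictly positive. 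At a point where $w_\lambda<0$ we write $g(u_\lambda)-g(u)=c_\lambda(x)w_\lambda$ with $c_\lambda\ge0$, by monotonicity and the local Lipschitz property of $g$. The goal is to prove $w_\lambda\ge0$ in $\Sigma_\lambda$ for every $\lambda$, then upgrade this to $w_\lambda>0$, which gives strict monotonicity. The antisymmetry $w_\lambda(x^\lambda)=-w_\lambda(x)$ makes the nonlocal term at a negative minimum strictly favorable, and the local term $-\Delta w_\lambda\le0$ there has the harmless sign.

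\textbf{Step 1 (start for $\lambda$ very negative).} Suppose $\inf_{\Sigma_\lambda}w_\lambda<0$. Because $u$ is bounded and we have no decay, the infimum need not be attained, so we use the standard perturbation. Pick $\bar x$ with $w_\lambda(\bar x)$ close to the infimum $-m$, and let $\psi$ be a smooth cutoff equal to $1$ near $\bar x$, supported in $B_{r}(\bar x)$ with $r$ comparable to $\lambda-\bar x_1$ (the distance to $T_\lambda$). Then $w_\lambda-\varepsilon_k\psi$ attains a minimum at some $x^k\in B_r(\bar x)$. Estimate the operator at $x^k$ from above:
\begin{itemize}
\item the local part contributes $\lesssim \varepsilon_k/r^2$;
\item the fractional part, using antisymmetry, is $\le -Cm/r^{2s}+C\varepsilon_k/r^{2s}$, since the reflected points give $w_\lambda\ge m$ at distance $\sim r$.
\end{itemize}
The right side of the equation is at least $f(x_1^k)c_\lambda(x^k)w_\lambda(x^k)$, up to the positive extra term. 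For $\lambda\to-\infty$ we use (F1): $f(x_1)\le o(1)|x_1|^{-2s}$. If $f(x_1^k)\le0$ this product is $\ge0$. Otherwise $c_\lambda$ is small, because either $u$ is small there ($g'(0)=0$) or it is bounded, so the product is $\ge -o(1)m|x_1^k|^{-2s}$. Comparing $-Cm\,r^{-2s}$ with this lower bound gives a contradiction once $r\lesssim|x_1^k|$. That comparison holds if we choose $r$ as the smaller of the distance to $T_\lambda$ and the distance to $\{x_1=0\}$-type scales; the local error $\varepsilon_k r^{-2}$ is absorbed by letting $\varepsilon_k\to0$ relative to $m$. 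The difference in scaling between $r^{-2}$ and $r^{-2s}$ is the reason the cutoff radius must be chosen carefully. Hence $w_\lambda\ge0$ for $\lambda\le\lambda_0$.

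\textbf{Step 2 (moving to $+\infty$).} Let $\lambda_0=\sup\{\lambda:w_\mu\ge0\text{ in }\Sigma_\mu\ \forall\mu\le\lambda\}$ and suppose $\lambda_0<\infty$. First, $w_{\lambda_0}>0$ in $\Sigma_{\lambda_0}$. At a zero minimum, the nonlocal term is strictly negative unless $w\equiv0$, while the right side is strictly positive by the strict monotonicity of $f$ and $g(u_\lambda)>0$; this is a contradiction. Now take $\lambda_k\searrow\lambda_0$ with $\inf w_{\lambda_k}=-m_k<0$ and approximate minimizers $x^k$. Use uniform continuity and boundedness of $u$ to translate in $x'$: the translates $u(\cdot+(0,x'^k))$ converge locally uniformly, by Arzelà–Ascoli and $C^2$ interior estimates for the mixed operator, to a limit $u_\infty$ solving the same equation. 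Two cases arise.
\begin{itemize}
\item If $x_1^k$ stays bounded, the limit gives $w^\infty_{\lambda_0}$ with a nonpositive interior minimum, or one near $T_{\lambda_0}$ where the normalized gradient is forced to vanish. The same strict inequality then contradicts this, using the positivity of $u_\infty$; that positivity follows from a lower bound obtained from the equation and the fact that $u>0$.
\item If $x_1^k\to-\infty$, we repeat the Step 1 estimate, which only needs $\lambda$ bounded above relative to $x_1^k$.
\end{itemize}

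\textbf{Step 3.} With $w_\lambda>0$ in $\Sigma_\lambda$ for all $\lambda$, we have $u(x^\lambda)>u(x)$ whenever $x_1<\lambda$, i.e. strict increase in $x_1$.

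The main obstacle is Step 2 (and the lower-order balance in Step 1). The infimum may escape to infinity in the $x'$ directions, so we must pass to a limit equation and keep the strict positivity coming from $f(2\lambda-x_1)-f(x_1)$ and $g(u_\infty)>0$ in the limit. I would first try to guarantee that $\inf u$ over bounded slabs $\{|x_1|\le R\}$ is positive, or otherwise handle $u_\infty\equiv0$ on a region using $g'(0)=0$.
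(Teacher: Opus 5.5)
\paragraph{Verdict.} Your Steps 1 and 3 match the paper. Step 1 is the paper's Lemma~\ref{lem3}; there you also need $\mathrm{dist}(x^k,T_\lambda)$ bounded below, which follows from uniform continuity, to absorb the $\varepsilon_k r^{-2}$ error. Step 2, however, has a genuine gap.

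\paragraph{Where Step 2 fails.} Translating in $x'$ and passing to a limit does not close the argument.
\begin{itemize}
\item Nothing in your proposal rules out that the translates converge to $u_\infty\equiv 0$. A positive bounded solution could a priori decay as $|x'|\to\infty$ on slabs, and your claim that a positive lower bound ``follows from the equation'' is not justified. If $u_\infty\equiv0$, then $w^\infty_{\lambda_0}\equiv 0$ and the limit gives no contradiction.
\item If the approximate minimizers approach $T_{\lambda_k}$, the limiting zero lies on $T_{\lambda_0}$, where $w^\infty_{\lambda_0}$ vanishes anyway by antisymmetry. Your ``normalized gradient'' remark would then need a Hopf-type lemma for $(-\Delta)^s-\Delta$ on antisymmetric functions, which you neither state nor prove.
\item You never show $m_k\to 0$. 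This does follow from uniform continuity, as in the paper.
\end{itemize}
You flag the degenerate case as open yourself. It is exactly where the hypothesis $g'(0)=0$ must enter, and your Step 2 never uses it.

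\paragraph{How the paper closes Step 2.} The paper avoids compactness and keeps the estimate quantitative, which in effect normalizes by $m_k$.
\begin{itemize}
\item It chooses $x^k$ with $w_{\lambda_k}(x^k)=-m_k+m_k^2$ and perturbs $w_{\lambda_k}$ by $m_k^2$ times a cutoff. The cutoff is first taken at scale $d_k^s$, to show $d_k=\frac12\mathrm{dist}(x^k,T_{\lambda_k})\ge \bar C>0$, and then at scale $d_k$.
\item At the resulting minimum point $\tilde x^k$ it derives, whenever $f(\tilde x^k_1)>0$,
\[
c(1-m_k)\le f(\tilde x^k_1)\,M_{\lambda_k}(\tilde x^k)\,d_k^{2s}.
\]
\item By (F1) this keeps $\tilde x^k_1$ bounded below, so $f(\tilde x^k_1)$ and $d_k$ are bounded above. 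The same inequality then forces $M_{\lambda_k}(\tilde x^k)\ge C>0$.
\item Since $|w_{\lambda_k}(\tilde x^k)|\le m_k\to0$ and $g'$ is continuous near $0$ with $g'(0)=0$, the difference quotient $M_{\lambda_k}(\tilde x^k)$ would tend to $0$ if $u(\tilde x^k)\to 0$. Hence $u(\tilde x^k)\ge C$ and $g(u_{\lambda_k}(\tilde x^k))\ge C$. This is precisely the nondegeneracy you were missing.
\item Returning to the full equation, the term $\bigl(f((\tilde x^k_1)^{\lambda_k})-f(\tilde x^k_1)\bigr)\,g(u_{\lambda_k}(\tilde x^k))$ is bounded below by a positive constant. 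This holds because $\lambda_k-\tilde x^k_1\ge d_k\ge\bar C$ and $\tilde x^k_1$ stays in a compact set.
\item The left-hand side, however, is $O(m_k)\to 0$, which gives the contradiction.
\end{itemize}
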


\begin{remark}
 As for the nonlinearity $g$, one natural example is $g(t)=t^p$ with $1<p<\infty$. For the nonlinearlity $f$, a typical example is 
\begin{equation*}
f(x_1)=x^m_1 ,~m ~~\mbox{is an odd positive integer number}.
 \end{equation*}
 In \textnormal{(F1)}, $f(x_1)$ is not required to be sign-changing; for example, $f(x_1) = e^{x_1}$ is also admissible.
 Moreover,  there is no assumption that 
 $f(x_1)$  approaches  infinity as $x_1$ tends to infinity.
\end{remark}

The monotonicity result in Theorem \ref{mthm12} enables us to prove the  Liouville theorem  for mixed local and nonlocal equation \eqref{v1} with indefinite nonlinearities.

\begin{theorem}\label{thmf10}
Let the assumptions in Theorem \ref{mthm12} hold. In addition, assume that
\begin{equation}\label{e13}
 f(x_1) \rightarrow +\infty ~\mbox{as}~x_1 \rightarrow +\infty.
 \end{equation}
 Then  equation \eqref{v1} admits no positive bounded solution.
\end{theorem}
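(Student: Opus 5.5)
We argue by contradiction, following the abstract's strategy of combining Theorem \ref{mthm12} with a mollified first eigenfunction and a sub-solution. Suppose $u$ is a positive bounded solution of \eqref{v1}. By Theorem \ref{mthm12}, $u$ is strictly increasing in $x_1$. Fix any point $\bar x$ with $\bar x_1 = 0$. Then for every $x$ with $x_1 \ge 0$ and $|x'-\bar x'|\le R$, monotonicity gives $u(x)\ge u(0,x')$. Since $u$ is continuous and positive, $u(0,x')\ge c_R>0$ on the compact set $|x'-\bar x'|\le R$. Because $g$ is nondecreasing and positive on $(0,\infty)$, it follows that $g(u(x))\ge g(c_R)=:\delta>0$ in the half cylinder $\{x_1\ge 0,\ |x'-\bar x'|\le R\}$. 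By \eqref{e13}, for any $M>0$ there is $L$ with $f(x_1)\ge M$ for $x_1\ge L$. Hence on any ball $B_R(x^0)$ with $x^0_1\ge L+R$ we have
\[
(-\Delta)^s u-\Delta u \ \ge\ M\delta .
\]
In particular, the right-hand side can be made arbitrarily large in balls of fixed radius far out in the $x_1$ direction, while $u\le \|u\|_\infty$ stays bounded.

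To turn this into a contradiction, test against a fixed nonnegative bump. Let $\varphi$ be a mollified first eigenfunction on $B_1(0)$: a smooth, compactly supported function with $0\le\varphi\le1$, $\int\varphi>0$, and $\operatorname{supp}\varphi\subset B_1$. Set $\varphi_{x^0}(x)=\varphi(x-x^0)$. Multiply the inequality by $\varphi_{x^0}$ and integrate. Using the self-adjointness of both operators, which is justified for $u\in C^2\cap\mathcal L_{2s}$ and $\varphi$ smooth with compact support, we get
\[
\int u\big[(-\Delta)^s\varphi_{x^0}-\Delta\varphi_{x^0}\big]\,dx \ \ge\ M\delta\int\varphi .
\]
For the left side, $|\Delta\varphi|$ is bounded with compact support, and $|(-\Delta)^s\varphi(x)|\le C(1+|x|)^{-n-2s}$, which is integrable. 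So the left side is at most $\|u\|_\infty\big(\|\Delta\varphi\|_{L^1}+\|(-\Delta)^s\varphi\|_{L^1}\big)=:C_0$, a constant independent of $x^0$ and $M$. Choosing $M>C_0/(\delta\int\varphi)$ yields a contradiction.

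Two points need care. First, $\delta$ must be independent of $M$. This holds because we fix $R=1$ and choose $x^0=(x^0_1,\bar x')$ with $x^0_1\ge L+1$, so the ball $B_1(x^0)$ stays in a cylinder around $\bar x'$ where the lower bound $c_1$ holds uniformly, by the monotonicity from Theorem \ref{mthm12}. Second, the integration by parts for the nonlocal term must be justified. This uses $u$ bounded together with the decay of $(-\Delta)^s\varphi$.

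If one prefers a sub-solution argument in the spirit of the paper, the alternative is as follows. Take $\underline u=\varepsilon\varphi_{x^0}$ with the mollified eigenfunction $\varphi$ normalized so that $(-\Delta)^s\varphi-\Delta\varphi\le\lambda\varphi+C$. Compare it with $u$ on the ball and push $\varepsilon$ up to get $u\ge$ arbitrarily large values, contradicting boundedness. The integral version above is more direct, though, and I expect the uniform choice of $\delta$ together with the justification of the duality pairing to be the only real obstacles.
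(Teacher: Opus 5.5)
Your proof is correct, but it takes a different route from the paper's.

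\textbf{The paper's route.} The paper argues pointwise. It places the ball $B_1(x_0)$, $x_0=(R,0,\dots,0)$, far to the right and uses monotonicity together with $\sup u<\infty$ to get $g(u)\ge m_0u$ there, so that $(-\Delta)^su-\Delta u\ge f(R-1)m_0u\ge\lambda_1u$ in $B_1(x_0)$. It then compares $u$ with the principal eigenfunction $\varphi$ of $(-\Delta)^s-\Delta$ on that ball. With $A_1=\max_{B_1(x_0)}\varphi/u$, the function $A_1u-\varphi$ is a nonnegative supersolution in the ball. It vanishes at an interior point and is positive outside, which contradicts the strong maximum principle of Lemma~\ref{lemf4}(ii).

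\textbf{Your route.} You test the equation against a translated bump and move the operator onto the test function by the duality identity, which is exactly Remark~\ref{mrem-inter}. You then bound the result by $\|u\|_\infty\|((-\Delta)^s-\Delta)\varphi\|_{L^1}$, which does not depend on the translation. Fixing a cylinder around $\bar x'$ so that $\delta=g(c_1)$ is independent of $M$ is exactly the care that is needed.

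\textbf{What each buys.} Your argument is more economical. It needs no eigenvalue theory (existence, positivity and boundary regularity of the eigenfunction, which the paper imports). It needs no strong maximum principle. It also avoids the linearization $g(u)\ge m_0u$, using only $g(u)\ge\delta$. In fact the eigenfunction plays no role for you: any nonnegative $C_0^\infty$ bump with $\int\varphi>0$ will do. A first eigenfunction of $B_1(0)$ mollified at scale one is supported in $B_2$, not $B_1$, so that label is best dropped. Your argument is essentially the elliptic counterpart of the paper's own Case~1 in the proof of Theorem~\ref{mthm2}.

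The paper's route, in exchange, is local. The eigenfunction comparison rules out any positive $u$ with $(-\Delta)^su-\Delta u\ge\lambda_1u$ on a single ball, and global boundedness enters only through $m_0$. Your contradiction is directly with $\|u\|_\infty$, so it would need separate control of $\int u\,|((-\Delta)^s-\Delta)\varphi_{x^0}|$ along the translates if boundedness were weakened.

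Your sketched alternative also works. Compare $u$ with $\varepsilon\varphi_{x^0}$ via Lemma~\ref{lemf4}(i), taking $\varepsilon=M\delta/\sup|((-\Delta)^s-\Delta)\varphi|$. This forces $\sup u\ge \varepsilon\max\varphi\to\infty$, which is a pointwise version of the same idea.
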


\begin{remark}
    It is also worth noting that the assumptions imposed in Theorems \ref{mthm12} and \ref{thmf10} are less restrictive than those in the fractional indefinite case (see \cite{CLZ, BPGQ2017}) and the classical indefinite case (see \cite{DLi}). A key feature of Theorems \ref{mthm12} and \ref{thmf10} is that the function $f(x_1)$  is not required to be nonpositive for $x_1<0$ or  positive for $x_1>0$ in Theorem  \ref{thmf10}. 
\end{remark}

We further consider the parabolic setting, where we establish monotonicity results for mixed local and nonlocal equations involving indefinite nonlinearities.

\begin{theorem}\label{mthm1} Let $0<s<1,~0<\alpha\leq 1,~u(x, t)\in ( C^2(\mathbb R^n)\cap \tilde { \mathcal L}_{2s})\times (C^1(\mathbb R)\cap {\mathcal L}^-_\alpha (\mathbb R)) $ be a positive bounded classical solution of equation \eqref{eq1}.
Suppose that  $u$ is uniformly continuous, $f$ satisfies condition $(F1)$ and $g$ satisfies condition $(F1)$,  
 then for each $t\in \mathbb R$,  $u(x, t)$ is strictly monotone increasing along $x_1$ direction.
\end{theorem}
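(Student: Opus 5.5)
We will prove Theorem \ref{mthm1} by the direct method of moving planes in the $x_1$ direction, carried out uniformly in $t$. (We read the second hypothesis as $g$ satisfying (F2).) For $\lambda\in\mathbb R$ let
$$T_\lambda=\{x_1=\lambda\},\qquad \Sigma_\lambda=\{x_1<\lambda\},\qquad x^\lambda=(2\lambda-x_1,x_2,\dots,x_n),$$
and set
$$w_\lambda(x,t)=u(x^\lambda,t)-u(x,t).$$
Since $f$ is increasing and $x^\lambda_1>x_1$ in $\Sigma_\lambda$, $w_\lambda$ satisfies
$$\partial^\alpha_t w_\lambda+(-\Delta)^s w_\lambda-\Delta w_\lambda = f(x_1^\lambda)g(u(x^\lambda,t))-f(x_1)g(u(x,t)).$$
Wherever $w_\lambda<0$, monotonicity of $g$ gives $g(u(x^\lambda,t))\le g(u(x,t))$. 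The right-hand side is therefore bounded below by
$$\bigl(f(x_1^\lambda)-f(x_1)\bigr)g(u(x^\lambda,t))+f(x_1)c_\lambda(x,t)w_\lambda,$$
with $c_\lambda$ the difference quotient of $g$ between $u(x^\lambda,t)$ and $u(x,t)$. The first term is nonnegative. The goal is to show $w_\lambda\ge0$ in $\Sigma_\lambda\times\mathbb R$ for every $\lambda$, and then strict positivity.

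\textbf{Step 1 (start far to the left).} We first show $w_\lambda\ge0$ for all $\lambda$ sufficiently negative. Suppose $\inf_{\Sigma_\lambda\times\mathbb R}w_\lambda=-m<0$. Since $u$ is bounded and not assumed to decay, the infimum may not be attained. We therefore use the standard perturbation: pick $(x^k,t_k)$ with $w_\lambda(x^k,t_k)\to -m$ and add a small bump in both space and time, a smooth cutoff times $\varepsilon_k$, so that the perturbed function attains a minimum at some $(\bar x^k,\bar t_k)$. At that minimum three estimates hold.
\begin{itemize}
\item $-\Delta\ge0$ up to an error $O(\varepsilon_k)$.
\item $\partial^\alpha_t\le 0$ up to $O(\varepsilon_k)$; for $\alpha=1$ one has $\partial_t=O(\varepsilon_k)$.
\item By the antisymmetry $w_\lambda(x^\lambda)=-w_\lambda(x)$, the maximum-principle computation gives
$$(-\Delta)^s w_\lambda(\bar x^k)\le -C\,\frac{m}{\operatorname{dist}(\bar x^k,T_\lambda)^{2s}}.$$
\end{itemize}
On the other side, $\bar x^k_1<\lambda\to-\infty$. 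By (F1), $f(\bar x_1^k)\le o(|\bar x^k_1|^{-2s})$ whenever $f>0$, and the term is harmless when $f\le 0$. Moreover $c_\lambda$ is bounded, since $g$ is locally Lipschitz and $u$ is bounded. Comparing the two sides,
$$\frac{Cm}{|\bar x^k_1-\lambda|^{2s}}\le o\bigl(|\bar x^k_1|^{-2s}\bigr)\,m+O(\varepsilon_k).$$
This requires controlling the ratio $|\bar x^k_1-\lambda|/|\bar x^k_1|\le 1$, which holds because $\lambda<0$ and $\bar x^k_1<\lambda$. Hence we have a contradiction for $\lambda$ negative enough. If the minimizing points drift to $T_\lambda$, the distance bound only improves. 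This is exactly where the decay rate $|x_1|^{2s}$ in (F1) enters. The local term is merely signed and does not help, but the nonlocal term supplies the needed strength.

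\textbf{Step 2 (move the plane to $+\infty$).} Define
$$\lambda_0=\sup\{\lambda: w_\mu\ge0 \text{ in }\Sigma_\mu\times\mathbb R\ \text{for all }\mu\le\lambda\}.$$
Suppose $\lambda_0<\infty$. Take $\lambda_k\searrow\lambda_0$ and minimizing sequences for $w_{\lambda_k}$ with negative infima $-m_k$. Translate these sequences: set
$$u_k(x,t)=u(x+(0,x'^k),\,t+t_k).$$
Uniform continuity and boundedness, together with Arzelà–Ascoli and interior regularity for the mixed operator, give local uniform convergence to a limit $u_\infty$. We distinguish two cases according to whether $x^k_1\to-\infty$ or $x_1^k$ stays bounded.

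\emph{Case $x^k_1\to-\infty$.} Repeat the Step 1 argument; the decay of $f$ at $-\infty$ again yields a contradiction.

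\emph{Case $x_1^k$ bounded.} The limit $w_\infty$ of $w_{\lambda_0}$ for $u_\infty$ (which solves an equation of the same form) satisfies $w_\infty\ge0$ and vanishes at an interior point. The strong maximum principle for the nonlocal part then forces $w_\infty\equiv0$ through antisymmetry. This contradicts the strictly positive source term
$$\bigl(f(x_1^{\lambda_0})-f(x_1)\bigr)g(u_\infty)>0,$$
which uses that $f$ is strictly increasing and $g>0$. Here we need $u_\infty>0$. If instead $u_\infty\equiv0$ somewhere, we use $g'(0)=0$ so that $c_\lambda\to0$, and rerun the argument with the $m_k$-normalization.

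Hence $\lambda_0=+\infty$, i.e. $w_\lambda\ge0$ for all $\lambda$. Strict positivity $w_\lambda>0$ then follows from the same strong maximum principle argument, because $w_\lambda\not\equiv0$ (again by strict monotonicity of $f$). This yields strict monotonicity of $u(\cdot,t)$ in $x_1$ for each $t$.

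\textbf{Main obstacle.} The hard step is the limiting argument in Step 2. Without decay, the minimizers may escape to infinity in $x'$ and $t$. The Marchaud memory term requires the limit to hold on the whole half-line $(-\infty,t]$, and the limit function must remain nontrivial. I would first try to handle this by normalizing $w_{\lambda_k}/m_k$ and exploiting $g'(0)=0$ to treat degenerate limits where $u\to0$.
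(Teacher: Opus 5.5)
Your Steps 1 and 3 are essentially the paper's argument: perturbed approximate minima with a space--time cutoff at scales $d_k$ and $d_k^{2s/\alpha}$, the antisymmetry bound $(-\Delta)^s\lesssim -m/d_k^{2s}$, (F1) combined with $d_k\le|\tilde x^k_1|$, and strict positivity from the positive source term at a zero. There is one sign slip: at a minimum one has $-\Delta W_k\le 0$, not $\ge 0$.

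The gap is in Step 2, Case ``$x^k_1$ bounded''. You assert that $w_\infty$ vanishes at an \emph{interior} point of $\Sigma_{\lambda_0}$, but nothing you have prevents the approximate minimizers $x^k$ of $w_{\lambda_k}$ from approaching $T_{\lambda_k}$.
\begin{itemize}
\item In Step 1 this was excluded by uniform continuity, because the infimum $-m$ is fixed.
\item In Step 2 the infima $-m_k$ tend to $0$. You need this fact and should prove it, as the paper does via uniform continuity. But then uniform continuity only gives $\operatorname{dist}(x^k,T_{\lambda_k})\ge\rho_k$ with $\rho_k\to 0$.
\item If $x^k_1-\lambda_k\to 0$, the zero of $w_\infty$ lies on $T_{\lambda_0}$, where $w_\infty=0$ anyway by antisymmetry. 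Then neither the strong maximum principle nor the term $\bigl(f(x_1^{\lambda_0})-f(x_1)\bigr)g(u_\infty)$ gives a contradiction. You would need a Hopf-type lemma for the mixed parabolic operator and $C^1$ control of $w_{\lambda_k}/m_k$, and you have neither.
\end{itemize}
The paper isolates exactly this as the claim $d_k=\frac12\operatorname{dist}(x^k,T_{\lambda_k})\ge\hat C>0$. It proves it by evaluating $w_{\lambda_k}-\gamma_k^2\hat\xi_k$ at its minimum. There the antisymmetric nonlocal term is $\lesssim-\gamma_k/d_k^{2s}$. The right-hand side $f(x_1)M_{\lambda_k}w_{\lambda_k}$ is $\ge -C\gamma_k$, because $f(x_1)\le f(\lambda_1)$ on $\Sigma_{\lambda_k}$ and $M_{\lambda_k}$ is bounded. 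Hence $d_k\to 0$ is impossible. The cutoff is taken at spatial scale $d_k^s$ so that its Laplacian costs only $O(\gamma_k^2 d_k^{-2s})$.

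Your Step 1 remark that approaching the plane ``only improves'' the bound is the right mechanism. However, it must be made quantitative against the local cutoff error $\varepsilon d^{-2}$. For instance, choose for each fixed $k$ the bump amplitude $\varepsilon\ll m_k\min(1,\rho_k^{2-2s})$. Then your Step 1 computation gives $d^{-2s}\le C$ with $C$ independent of $k$.

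With $d_k\ge\hat C$ in hand, the paper needs no compactness. At the minimum $(\tilde x^k,\tilde t_k)$ of the scale-$d_k$ perturbation it gets $c(1-\gamma_k)\le f(\tilde x^k_1)M_{\lambda_k}(\tilde x^k,\tilde t_k)\,d_k^{2s}$. From this:
\begin{itemize}
\item (F1) keeps $\tilde x^k_1$ bounded below, hence $d_k$ bounded above.
\item $f(\tilde x^k_1)$ and $M_{\lambda_k}(\tilde x^k,\tilde t_k)$ stay $\ge C>0$.
\item $g'(0)=0$ then forces $u(\tilde x^k,\tilde t_k)\ge C$.
\end{itemize}
Since $\tilde x^k_1$ ranges in a bounded set and $(\tilde x^k_1)^{\lambda_k}-\tilde x^k_1\ge 2\hat C$, strict monotonicity of $f$ gives $\bigl(f((\tilde x^k_1)^{\lambda_k})-f(\tilde x^k_1)\bigr)g(u_{\lambda_k}(\tilde x^k,\tilde t_k))\ge C>0$. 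The remaining term $f(\tilde x^k_1)M_{\lambda_k}w_{\lambda_k}$ is $O(\gamma_k)$. This contradicts the upper bound $c(-\gamma_k+\gamma_k^2)/d_k^{2s}<0$.

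This direct estimate also disposes of the obstacle you flagged. Your translation and Arzel\`a--Ascoli step, as stated, needs uniform regularity of the translates to pass $\partial_t^\alpha$ (with its memory on $(-\infty,t]$), $(-\Delta)^s$ and $-\Delta$ to the limit, and the hypotheses do not provide it. Your treatment of the degenerate case $u_\infty=0$ is also only a sketch. The estimate at the approximate minimum replaces both.
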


The monotonicity result allows us to construct an explicit sub-solution, which in turn leads to the Liouville theorem for the mixed local and nonlocal indefinite parabolic equation \eqref{eq1}.

\begin{theorem}\label{mthm2}
Let the assumptions in Theorem \ref{mthm1} hold, suppose further that
$$\underset{x_1\rightarrow +\infty}{\lim}f(x_1)=+\infty.$$ Then equation \eqref{eq1} possesses no positive bounded classical solution.
\end{theorem}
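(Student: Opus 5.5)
We argue by contradiction: suppose $u$ is a positive bounded classical solution of \eqref{eq1}. By Theorem \ref{mthm1}, for each $t$ the function $u(\cdot,t)$ is strictly increasing in $x_1$. The aim is to show that $u$ stays bounded below by a fixed positive constant on a half-space $\{x_1>R\}\times\mathbb R$. Since $f(x_1)\to+\infty$, the right-hand side $f(x_1)g(u)$ then becomes arbitrarily large compared with $u$ there, and this is incompatible with the boundedness of $u$. The contradiction will come from testing the equation against a localized first eigenfunction, in the spirit of the elliptic argument in Theorem \ref{thmf10}.

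\emph{Step 1 (lower bound).} Fix the point $x^0=(x_1^0,0,\dots,0)$ and the time $t_0=0$, and set $c_0:=u(x^0,0)>0$. By monotonicity, $u(x,0)\ge c_0$ whenever $x_1\ge x_1^0$ and $x'=0$. This bound holds only at one time and along one line, so we need to spread it to all times and to a whole region. Uniform continuity of $u$ gives $u\ge c_0/2$ on a small cylinder $B_\delta(x^0)\times(-\delta,\delta)$. Combined with monotonicity in $x_1$, this gives $u\ge c_0/2$ on $\{x_1\ge x_1^0-\delta,\ |x'|<\delta\}\times(-\delta,\delta)$.

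To extend the bound to all later times, we compare $u$ with an explicit sub-solution $\underline u(x,t)=\varepsilon\,\phi(x)\,\eta(t)$ on $D\times(-\infty,T]$. Here $D$ is a ball far to the right, where $f\ge 0$, and $\phi$ is a mollified first eigenfunction of $(-\Delta)^s-\Delta$ on $D$, extended by zero. Note that $g\ge0$ and $f>0$ there, so $u$ is a supersolution of $\partial^\alpha_t u+(-\Delta)^s u-\Delta u\ge 0$ in that region. A maximum principle for the mixed parabolic operator, of the same type as the one established for Theorem \ref{mthm1}, then propagates the positive lower bound in time. Translating in $t$ and using that $\underline u$ is independent of the starting time yields
\[
\inf_{x\in D,\ t\in\mathbb R}u(x,t)\ge c_1>0
\]
on a fixed ball $D=B_1(\bar x)$, and hence, by monotonicity, on every translate of $D$ to the right.

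\emph{Step 2 (contradiction).} Choose $R$ large, set $D_R=B_1(Re_1)$, and let $\phi_R$ be the first eigenfunction of the mixed operator on $D_R$, with eigenvalue $\lambda_1$ independent of $R$, normalized so that $\int\phi_R=1$. Multiply \eqref{eq1} by $\phi_R$ and integrate in $x$. Let $U(t)=\int u\phi_R\,dx$, which is bounded by $M=\sup u$. Since $u\ge0$ outside $D_R$, moving the mixed operator onto $\phi_R$ gives
\[
\int \big((-\Delta)^s-\Delta\big)u\,\phi_R \le \lambda_1 U .
\]
Moreover $f\ge f_R:=\min_{D_R}f\to\infty$ as $R\to\infty$, and $g(u)\ge g(c_1)>0$ because $g$ is nondecreasing. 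Hence
\[
\partial_t^\alpha U(t)\ge f_R\,g(c_1)-\lambda_1 M=:K_R ,
\]
and $K_R>0$ for $R$ large.

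When $\alpha=1$, this says $U'\ge K_R>0$ for all $t$, so $U$ grows without bound, contradicting $U\le M$. When $\alpha<1$, pick a time $\bar t$ where $U$ nearly attains its supremum over $(-\infty,\bar t]$. There, $\partial^\alpha_t U(\bar t)$ is at most a small positive quantity, which can be made smaller than $K_R$. If the supremum is attained exactly, then $\partial_t^\alpha U(\bar t)\le 0$, a contradiction. Otherwise we use a standard approximation: multiply by a cutoff in $t$ and pass to a limit.

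The main obstacle is Step 1, obtaining a lower bound on $u$ that is uniform in time. The Marchaud derivative has memory, so the maximum principle has to be run on $(-\infty,T]$. The comparison must also handle the boundedness of $u$ from above and the exterior data for the nonlocal term. The first approach to try is the explicit sub-solution $\varepsilon\phi(x)$ with no time dependence. The hope is that $((-\Delta)^s-\Delta)(\varepsilon\phi)=\varepsilon\lambda_1\phi$ is dominated by $f(x_1)g(u)$ once $f$ is large on $D$. This requires a lower bound on $g(u)$, and such a bound is circular. To break the circularity, we use $u>0$ together with the linearization $g(u)\ge$ a function comparable to $u$ near $0$, then choose $f$ large enough that $f g(u)\ge \lambda_1 u$ wherever $u$ is small. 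Where this is delicate because $g'(0)=0$, we instead rely on monotonicity together with uniform continuity, which prevents $u$ from decaying along $x_1\to+\infty$.
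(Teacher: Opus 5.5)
Your overall architecture matches the paper: test against a first eigenfunction of $(-\Delta)^s-\Delta$, move it to the right using monotonicity in $x_1$, and exploit $f_R\to\infty$. The genuine gap is Step 1. The time-uniform bound $\inf_{D\times\mathbb R}u\ge c_1>0$ is never established, and the routes you sketch cannot establish it.

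\begin{itemize}
\item The homogeneous inequality $\partial_t^\alpha u+(-\Delta)^s u-\Delta u\ge 0$ in $D$ only gives lower bounds that decay in time. Since $((-\Delta)^s-\Delta)\phi=\lambda_1\phi>0$ in $D$, the product $\varepsilon\phi(x)\eta(t)$ is a sub-solution only if $\partial_t^\alpha\eta\le-\lambda_1\eta$, e.g.\ $\eta=e^{-\lambda_1 t}$ when $\alpha=1$.
\item The reaction term does not rescue a time-independent sub-solution. (F2) gives $g'(0)=0$, so $g(\tau)/\tau\to 0$ as $\tau\to0^+$. Hence $f\,g(\varepsilon\phi)\ge\lambda_1\varepsilon\phi$ fails near $\partial D$ however large $f$ is. Your claim that $g(u)$ dominates something comparable to $u$ near $0$ is false exactly under the hypotheses.
\item Monotonicity in $x_1$ and uniform continuity say nothing about $t\to\pm\infty$.
\item The ``translation in $t$'' needs the seed bound $u(x^0,t_0)\ge c_0$ at every $t_0$, but you only have it at $t_0=0$.
\end{itemize}

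There is also a sign error in the case $\alpha<1$ of Step 2. At a point where $U$ attains its supremum over $(-\infty,\bar t]$, one has $U(\bar t)-U(\tau)\ge 0$, so $\partial_t^\alpha U(\bar t)\ge 0$. This is consistent with $\partial_t^\alpha U\ge K_R$. A contradiction can only come from a first minimum (or first zero) of $U$ minus a sub-solution.

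The missing observation is that no time-uniform bound is needed, because the contradiction can be reached on a fixed bounded time window. The paper only uses $m_T=\inf_{B_2(0)\times[1,T]}g(u)>0$, which is positive simply because $u$ is continuous and positive on a compact set. Monotonicity in $x_1$ transfers this bound to $B_2(R+2,0')\times[1,T]$ for every $R\ge 0$. The horizon $T$ depends only on $M$ and an $R$-independent lower bound on $\psi_R$ at early times, so it can be fixed before $R$. Then $R$ is taken so large that $-\lambda_1+f_Rm_T/M$ beats the required growth rate. The ODE ($\alpha=1$), or the comparison with $c_1t^\theta$ via Lemma~\ref{m2m} ($\alpha<1$), then forces $\psi_R$ above $M$.

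Your Step 2 works once Step 1 is replaced in this way. Set $c_*:=\inf_{\overline{B_1(0)}\times[0,1]}u>0$. Monotonicity gives $\partial_t^\alpha U\ge f_Rg(c_*)-\lambda_1M=:K_R$ on $[0,1]$ for every $R\ge0$.
\begin{itemize}
\item For $\alpha=1$, integrate: $U(1)-U(0)\ge K_R>M$ for large $R$, contradicting $0<U\le M$.
\item For $\alpha<1$, let $\kappa>0$ be such that $\partial_t^\alpha(\kappa t_+^\alpha)=1$ for $t>0$; this is a constant by homogeneity. Then $v=U-\kappa K_R t_+^\alpha$ satisfies $\partial_t^\alpha v\ge 0$ on $(0,1]$ and $v=U>0$ on $(-\infty,0]$. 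The first-zero argument of Lemma~\ref{m2m} gives $U(1)>\kappa K_R>M$ for $R$ large, again a contradiction.
\end{itemize}
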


\begin{remark}
    Theorems \ref{mthm1} and \ref{mthm2} apply to both the classical first-order  time derivative ($\alpha =1$) and   Marchaud fractional derivative ($0<\alpha<1$). When $0<\alpha<1$, equation \eqref{eq1} exhibits double non-locality due to the presence of both the temporal fractional derivative  $\partial_t^\alpha$ and  the spatial fractional derivative $(-\Delta)^s$.    This requires taking into account not only the double non-locality but also the local effects when analyzing equation \eqref{eq1}.
In addition, compared with the fractional indefinite parabolic equations involving the Marchaud fractional time derivative or the classical fractional indefinite parabolic equations (see \cite{CG, CWW}), there is no requirement that 
 $f(x_1)\rightarrow -\infty$  when $x_1 \to -\infty$. 
\end{remark}

To illustrate the main ideas employed in this paper for establishing monotonicity and the Liouville theorem, we first introduce some commonly used notation.

For any given $\lambda \in \mathbb{R},$ let
$$
T_\lambda=\{ x \in \mathbb{R}^n \mid x_1 =\lambda,\,\, \mbox{for some}\,\, \lambda \in \mathbb{R} \}
$$
denote the moving planes. Define
$$
\Sigma_\lambda:= \{ x \in \mathbb{R}^n \mid x_1 <\lambda \},
$$
as the region to the left of the plane, and
$$
x^\lambda=(2\lambda-x_1, x_2, ..., x_n)
$$
as the reflection of $x$ about the plane $T_\lambda.$

Let $u(x)$ or $u(x,t)$ denote  a positive solution of equation \eqref{v1} or \eqref{eq1}, respectively. In order to establish 
 the monotonicity of the solutions, we 
 compare the values of $u(x)$with $ u(x^\la)$ (or $u(x,t)$ with $u(x^\la,t)$), 
To facilitate the analysis, we focus on the difference
$$w_\la(x)=u(x^\la)-u(x)$$
and 
$$w_\la(x,t)=u(x^\la,t)-u(x,t).$$
To establish the monotonicity in the $x_1$ direction, it suffices to show that  $w_\lambda(x)$ or $w_\lambda(x, t)$ is nonnegative for all $x\in \Sigma_\lambda$  and 
$t \in \mathbb{R}$ and for any fixed $\lambda \in \mathbb{R}.$ 
 Essentially, our main task is to establish certain forms of maximum principles in unbounded domains. The primary challenge stems from the fact that, in the absence of decay conditions at infinity for $u(x)$ or $u(x, t)$, which cannot prevent the minimizing sequence of antisymmetric function $w_\la$ from leaking to infinity. As a consequence, the maxima or minima may fail to  be achieved. 
Clearly, due to the  presence of $f(x_1)$ and the mixed local and nonlocal operator, the coefficients of the transformed equation lack the required monotonicity. Therefore, the Kelvin transformation is not applicable.
For classical elliptic indefinite problem or fractional elliptic indefinite problem (\cite{CLZ, DLi}),  one can construct an auxiliary function 
$$ \bar{w}_\lambda(x)=\frac{w_\lambda(x)}{h(x)} $$
with 
\begin{equation*}
h(x)=|x-(\lambda+1)e_1|^\sigma >0,\,\,  \mbox{for some} \,\, \sigma>0.
\end{equation*}
With this choice, we have  $\bar w(x) \to 0$ as $|x| \to \infty$,  which provides the necessary decay to carry out the subsequent analysis.
However, in the case of the mixed local and nonlocal operator, applying it to  $\bar w (x)$
 does not yield suitable estimates due to the difference in order between  $(-\Delta)^s$ and $-\Delta$.

 To overcome this difficulty, we instead subtract a sequence of cutoff functions, thereby ensuring that the modified auxiliary functions attain their minima. By carefully estimating the singular integrals associated with the mixed local and nonlocal  operator acting on these auxiliary functions, we are able to derive a contradiction if 
$w_\lambda$ is negative somewhere in $\Sigma_\lambda$. 
 In the first step, we show that $w_\lambda(x) \geq 0$ when $\lambda$ is sufficiently negative.
To construct a sequence ${x^k}$ approaching the minima of $w_\lambda$, it is crucial to prove that the distance $r_k := \operatorname{dist}(x^k, T_\lambda)$ is bounded away from zero, due to the different rescaling behaviors of $(-\Delta)^s$ and $-\Delta$ within the ball $B_{r_k}(x^k)$.
In the second step, we continuously move the plane $T_\lambda$ to the right, ensuring that $w_\lambda(x) \geq 0$ for all $x \in \Sigma_\lambda$ until reaching the limiting position.
To this end, we construct a sequence ${x^k}$ approaching the minima of $w_{\lambda_k}$. Since $w_{\lambda_k}$ depends on $k$, it is necessary not only to show that $r_k$ is bounded away from zero, but also that it remains bounded above.
Due to the mismatch in the scaling properties of $(-\Delta)^s$ and $-\Delta$, some delicate estimates and new ideas are required to address this difficulty. To this end, we construct suitably modified auxiliary functions in two distinct regions—$B_{d_k^s}(x^k) \cap T_{\lambda_k}$ and $B_{d_k}(x^k)$—to derive estimates for $r_k$, see Step~2 in the proofs of Theorems~\ref{mthm12} and~\ref{mthm1} for details. The techniques developed in this paper can be readily adapted to investigate the qualitative properties of solutions to other mixed local and nonlocal elliptic or parabolic problems.



The remainder of the paper is organized as follows. In Section 2, we establish several key tools used throughout the paper, including maximum (and minimum) principles for mixed local and nonlocal operators in unbounded domains, as well as an integration by parts formula for the mixed operator. In Section 3, we apply the method of moving planes to prove the monotonicity of solutions and construct a subsolution to derive the nonexistence of positive solutions, thereby establishing Theorems \ref{mthm12} and \ref{thmf10}. Section 4 is devoted to proving the monotonicity of solutions in the $x_1$-direction and to establishing Liouville-type theorems for mixed local and nonlocal parabolic problems with indefinite nonlinearities, thereby proving Theorems \ref{mthm1} and \ref{mthm2}.

\section{Preliminaries }
In this section,  we establish  several fundamental tools used in proving the monotonicity and Liouville theorems for mixed local and nonlocal equations. These include the maximum principle for $u(x)$ in bounded domains (Lemma \ref{lemf4}), the minimum principle for antisymmetric functions in bounded domains (Lemma \ref{lem3}), and an integration by parts formula (Lemma \ref{mollification}).

\begin{lemma}\label{lemf4}(Maximum principle)  Let ~$\Omega$ be a bounded open domain.  Assume that $u(x)\in C^{2}(\Omega)\cap { \mathcal L}_{2s}$  is a lower semi-continuous function in $\bar{\Omega} $ and satisfies
\begin{equation}\label{ppp}\left\{
\begin{array}{ll}(-\Delta)^{s}u(x)-\lap u(x)\geq0, &\mbox~x\in \Omega,\\ u(x)\geq0, &\mbox~x\in {\mathbb R^{n}}  \setminus\Omega.
\end{array}
\right.\end{equation}

${\rm (i)}$ Then  \be \label{5201}
u(x)\geq0, ~x\in  {\mathbb R^{n}}.
\ee

${\rm (ii)}$
Under the conclusion \eqref{5201}, if $u  = 0$ at some point in
$\Omega$, then
$u(x) = 0 $ almost everywhere in $\mathbb R^n.$

${\rm (iii)}$ Furthermore, \eqref{5201} holds   for an unbounded domain $\Omega$ if we further assume that
$$\varliminf_{\left| x \right| \to \infty } {u }(x) \geq 0.$$
   \end{lemma}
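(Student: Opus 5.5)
We argue by contradiction at a negative minimum, using that both parts of the operator have a definite sign there.

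For (i), suppose $u$ is negative somewhere in $\Omega$. Since $u$ is lower semi-continuous on $\bar\Omega$ and $\bar\Omega$ is compact, $u$ attains its minimum over $\bar\Omega$ at some $x^0$, and $u(x^0)<0$. Because $u\ge 0$ on $\mathbb R^n\setminus\Omega$, the point $x^0$ lies in $\Omega$ and is a global minimum of $u$ on $\mathbb R^n$.

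At $x^0$ we use $u\in C^2(\Omega)$: the Hessian is nonnegative, so $-\Delta u(x^0)\le 0$. For the nonlocal part,
\[
(-\Delta)^s u(x^0)=C_{n,s}\,P.V.\int_{\mathbb R^n}\frac{u(x^0)-u(y)}{|x^0-y|^{n+2s}}dy .
\]
Every integrand value is $\le 0$. On $\mathbb R^n\setminus\Omega$, which has positive measure because $\Omega$ is bounded, we have $u(x^0)-u(y)\le u(x^0)<0$. Hence $(-\Delta)^s u(x^0)<0$, and the integral is finite and well defined because $u\in\mathcal L_{2s}$ and $u\in C^2$ near $x^0$. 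Adding the two parts gives $(-\Delta)^s u(x^0)-\Delta u(x^0)<0$, which contradicts \eqref{ppp}. So $u\ge 0$ everywhere.

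For (ii), let $u\ge0$ and $u(\bar x)=0$ for some $\bar x\in\Omega$. Then $\bar x$ is a global minimum, so again $-\Delta u(\bar x)\le 0$. The equation therefore gives
\[
0\le (-\Delta)^s u(\bar x)-\Delta u(\bar x)\le -C_{n,s}\int_{\mathbb R^n}\frac{u(y)}{|\bar x-y|^{n+2s}}dy\le 0 .
\]
The integrand is nonnegative, and it is integrable since $u\in\mathcal L_{2s}$ and $u$ is $C^2$ near $\bar x$. This forces $\int u(y)|\bar x-y|^{-n-2s}dy=0$, hence $u=0$ almost everywhere.

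For (iii), $\Omega$ is unbounded, so a negative infimum might not be attained. We rule this out with the assumption $\varliminf_{|x|\to\infty}u\ge0$.
1. If $\inf_{\mathbb R^n}u=-m<0$, choose $R$ with $u>-m/2$ for $|x|>R$.
2. The infimum is then the infimum over $\bar\Omega\cap \overline{B_R}$, because $u\ge0$ outside $\Omega$.
3. This set is compact, so lower semicontinuity gives a minimum point $x^0\in\Omega$.
4. The argument of (i) applies verbatim at $x^0$. Strict negativity of the nonlocal term now comes from the set $\{|y|>R\}$, where $u(x^0)-u(y)\le -m/2<0$ and which has positive measure.

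This yields the same contradiction.

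The only delicate point is justifying that the minimum is attained and lies in the interior, where $C^2$ regularity is available. In (iii) this is exactly where the liminf hypothesis enters. I would also take care that the strict inequality in the nonlocal term comes from a set of positive measure, rather than from any sign information on $-\Delta u$.
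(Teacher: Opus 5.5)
Your proof is correct and follows the paper's argument. You take a negative minimum of $u$ over $\bar\Omega$, attained by lower semicontinuity and forced into $\Omega$; there $-\Delta u\le 0$ and the nonlocal term is strictly negative, and (ii) follows from the same inequality at a zero minimum. In (iii) you are slightly more careful than the paper's ``repeat the argument'': you get strict negativity from $\{|y|>R\}$ rather than from $\mathbb{R}^n\setminus\Omega$, which may have measure zero when $\Omega$ is unbounded. You should also justify $\inf u>-\infty$, for instance by setting $m=-u(x_*)$ for a fixed point $x_*$ with $u(x_*)<0$ and choosing $R$ with $|x_*|\le R$.
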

\begin{remark}
Conclusions (i) and (ii) were previously established by Biagi et al. \cite{BDVV}, assuming that $u \in C(\mathbb{R}^n) \cap C^2(\Omega) \cap \mathcal{L}_{2s}$. For completeness and the reader’s convenience, we include a more detailed proof below.
\end{remark}

\begin{proof}[Proof of Lemma \ref{lemf4}]
(i) Suppose that \eqref{5201} does not hold. Then, by the lower semi-continuity of \( u \) on \( \bar{\Omega} \), there exists a point \( \bar{x} \in \bar{\Omega} \) such that
\[
u(\bar{x}) = \min_{\bar{\Omega}} u(x) < 0.
\]
Since \( u(x) \geq 0 \) for all \( x \in \mathbb{R}^n \setminus \Omega \), it follows that \( \bar{x} \in \Omega \); that is, \( \bar{x} \) is an interior minimum point of \( u \) in \( \Omega \). Hence, \( \Delta u(\bar{x}) \geq 0 \). A direct computation then yields
\[
\begin{aligned}
(-\Delta)^s u(\bar{x}) - \Delta u(\bar{x}) 
&\leq C_{n,s}~\text{P.V.} \int_{\mathbb{R}^n} \frac{u(\bar{x}) - u(y)}{|\bar{x} - y|^{n+2s}} \, dy \\
&\leq C_{n,s}~\text{P.V.} \int_{\mathbb{R}^n \setminus \Omega} \frac{u(\bar{x}) - u(y)}{|\bar{x} - y|^{n+2s}} \, dy \\
&< 0,
\end{aligned}
\]
which contradicts the assumed inequality \eqref{ppp}. Therefore, \eqref{5201} must hold.

(ii) Based on \eqref{5201}, suppose there exists a point \( \bar{x} \in \Omega \) such that
\[
u(\bar{x}) = \min_{\bar{\Omega}} u(x) = 0.
\]
Then,
\[
0 \leq (-\Delta)^s u(\bar{x}) - \Delta u(\bar{x}) \leq C_{n,s}~\text{P.V.} \int_{\mathbb{R}^n} \frac{-u(y)}{|\bar{x} - y|^{n+2s}} \, dy,
\]
and since \( u \geq 0 \) in \( \mathbb{R}^n \), we conclude that
\[
u(x) = 0 \quad \text{almost everywhere in } \mathbb{R}^n.
\]

(iii) If \( \Omega \) is unbounded, the condition \( \lim_{|x| \to \infty} u(x) \geq 0 \) guarantees that any negative minimum must be attained at some point. One can then repeat the argument in part (i) to derive a contradiction.

This completes the proof of Lemma \ref{lemf4}.
\end{proof}

\begin{lemma}\label{lem3}
(Minimum principle for antisymmetric functions in unbounded domains) Let $\Omega $ be an  unbounded open domain in $\Sigma_\la$ with $\la$ sufficiently negative.
Assume  that $ W_\la(x)=u(x)-u_\la(x) \in   C^2(\Omega)\cap \mathcal {L}_{2s}$ is bounded and  uniformly continuous. If
\begin{equation}\label{pp}
    \underset{x_1\rightarrow-\infty}{\lim \sup}f(x_1)|x_1|^{2s} \leq 0,
\end{equation} 
and
\begin{equation}\label{diffe}
\left\{\aligned
&(-\Delta)^sW_\lambda(x)-\lap W_\la(x) -C_0(x)f(x_1) W_\lambda(x) \leq 0,  &   x \in   \Omega,\\
&W_\lambda(x  )\leq 0,  &  x  \in   \Sigma_{\lambda}  \setminus \Omega,\\
\endaligned \right.
\end{equation}
where $C_0(x)$ is a  bounded and nonnegative function,
 then  we have
\begin{equation}\label{v11}
W_\la(x)\leq 0,~x\in \Omega.
\end{equation}
\end{lemma}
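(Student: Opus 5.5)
The plan is to argue by contradiction and to force a maximum to be attained by adding a small localized bump. Suppose \eqref{v11} fails, so that
$$A:=\sup_{\Sigma_\lambda}W_\lambda>0.$$
Because $W_\lambda$ is bounded, $A<\infty$. Choose $x^k\in\Sigma_\lambda$ with $W_\lambda(x^k)\to A$, and set $\varepsilon_k:=A-W_\lambda(x^k)\to0$ and $r_k:=\lambda-x^k_1=\operatorname{dist}(x^k,T_\lambda)$.

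\emph{Step 1: $r_k$ is bounded below.} This is the first place the scaling mismatch between $(-\Delta)^s$ and $-\Delta$ matters. Since $W_\lambda$ is antisymmetric, it vanishes on $T_\lambda$. It is also uniformly continuous, and $W_\lambda(x^k)\geq A/2$ for large $k$. Hence there is $\delta>0$ independent of $k$ with $r_k\geq\delta$. Without this bound the Laplacian of the bump below would be of order $\varepsilon_k r_k^{-2}$, which would not be controlled by the nonlocal gain of order $r_k^{-2s}$.

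\emph{Step 2: perturb and locate the maximum.} Fix a smooth cutoff $\eta\in C_0^\infty(B_1(0))$ with $0\le\eta\le1$ and $\eta(0)=1$. Put $\eta_k(x)=\eta\big(2(x-x^k)/r_k\big)$, which is supported in $B_{r_k/2}(x^k)\subset\Sigma_\lambda$, and define
$$V_k=W_\lambda+\varepsilon_k\eta_k .$$
Outside $B_{r_k/2}(x^k)$ we have $V_k=W_\lambda\le A=V_k(x^k)$. So $\sup_{\Sigma_\lambda}V_k$ is attained at some $\bar x^k\in\overline{B_{r_k/2}(x^k)}$. At that point $W_\lambda(\bar x^k)\geq A-\varepsilon_k>0$. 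Since $W_\lambda\le0$ on $\Sigma_\lambda\setminus\Omega$, this gives $\bar x^k\in\Omega$. Moreover $\bar r_k:=\lambda-\bar x^k_1\in[r_k/2,\,3r_k/2]$.

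\emph{Step 3: estimate the operator at $\bar x^k$.} For the local term, $\bar x^k$ is an interior maximum, so $-\Delta V_k(\bar x^k)\geq0$. For the nonlocal term, we use antisymmetry of $V_k$ after reflection: $\eta_k$ is extended by odd reflection, which keeps the computation identical. Then
$$(-\Delta)^sV_k(\bar x^k)=C_{n,s}\int_{\Sigma_\lambda}\Big[\frac{V_k(\bar x^k)-V_k(y)}{|\bar x^k-y|^{n+2s}}+\frac{V_k(\bar x^k)+V_k(y)}{|\bar x^k-y^\lambda|^{n+2s}}\Big]dy .$$
Combined with $|\bar x^k-y|<|\bar x^k-y^\lambda|$, this gives
$$(-\Delta)^sV_k(\bar x^k)\geq 2C_{n,s}V_k(\bar x^k)\int_{\Sigma_\lambda}\frac{dy}{|\bar x^k-y^\lambda|^{n+2s}}\geq \frac{c\,A}{\bar r_k^{2s}} .$$
On the other hand, the bump contributes only
$$\big|(-\Delta)^s(\varepsilon_k\eta_k)\big|+\big|\Delta(\varepsilon_k\eta_k)\big|\le C\varepsilon_k\big(r_k^{-2s}+r_k^{-2}\big).$$
Plugging this into \eqref{diffe} at $\bar x^k$ yields
$$\frac{cA}{\bar r_k^{2s}}\le C_0(\bar x^k)f(\bar x^k_1)W_\lambda(\bar x^k)+C\varepsilon_k\big(r_k^{-2s}+r_k^{-2}\big).$$

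\emph{Step 4: conclude using the decay of $f$.} Since $\lambda<0$, we have $|\bar x^k_1|=|\lambda|+\bar r_k\geq\bar r_k$. By \eqref{pp}, for $\lambda$ sufficiently negative, $f(\bar x^k_1)\le\tau|\bar x^k_1|^{-2s}\le\tau\bar r_k^{-2s}$ for any prescribed $\tau>0$. If $f(\bar x^k_1)\le 0$ this is trivial. Multiply by $\bar r_k^{2s}$ and use $\|C_0\|_\infty<\infty$, $W_\lambda\le A$, and $r_k\geq\delta$. This gives
$$cA\le \|C_0\|_\infty\,\tau A+C\varepsilon_k\big(1+\delta^{2s-2}\big)\to\|C_0\|_\infty\,\tau A .$$
Choosing $\tau<c/\|C_0\|_\infty$, i.e.\ $\lambda$ negative enough, gives a contradiction.

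The main obstacle I expect is Step 3's lower bound together with the uniform bound in Step 1. The reflected-kernel estimate must stay uniform even though $\bar r_k$ may tend to infinity. Uniform continuity of $W_\lambda$ is what prevents the maximizing sequence from hugging $T_\lambda$.
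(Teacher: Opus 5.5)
Your proposal is correct and follows essentially the same route as the paper's proof. The shared steps are a maximizing sequence kept uniformly away from $T_\lambda$ by uniform continuity, a small bump of radius comparable to $\operatorname{dist}(x^k,T_\lambda)$ that forces an attained maximum, the antisymmetry lower bound $cA/\bar r_k^{2s}$ for $(-\Delta)^s$, and the decay hypothesis on $f$ combined with $|\bar x^k_1|\ge \bar r_k$. Your odd reflection of the bump and your check that $W_\lambda(\bar x^k)>0$ forces $\bar x^k\in\Omega$ are slightly cleaner than the paper's version, which carries a $2A-\varepsilon_k$ term and simply asserts $B_{r_k}(x^k)\subset\Omega$, but they do not change the argument.
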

\begin{remark}
In the case  $C_0(x)=0$, the assumption that 
$\lambda$ is sufficiently negative and condition \eqref{pp} are not required, while the conclusion \eqref{v11} remains valid.
\end{remark}

\begin{proof}
[Proof of Lemma \ref{lem3}.]
Suppose \eqref{v11} does not hold, then
$$A:=\underset{\Omega}{\sup}~W_\la(x)>0.$$
Thus there exists a sequence $\{x^k\} \subset \Omega$ such that
\begin{equation}\label{mit}
W_\la(x^k)\rightarrow A,~\mbox{as}~k\rightarrow +\infty.
\end{equation}

Let $$ \zeta(x)=\left\{\begin{array}{lll} M e^{\frac{1}{|x|^2-1}},& |x|< 1,\\
 0,&   |x|\geq 1,
  \end{array}\right.$$
  where $M$ is a positive constant such that $\underset{\mathbb R^n}{\max}~\zeta(x)=1$. Obviously, $\zeta(x)$ is radially decreasing with respect to $|x|$, and its support is $B_1(0)$. We rescale $\zeta(x)$ by setting
  $$ \Psi_k(x)=\zeta(\frac{x-x^k}{r_k}),$$
  where $r_k=\frac{dist(x^k,T_\la)}{2}.$ 
  
  We claim that $r_k$ is bounded away from zero uniformly, that is, there exists a constant
  $\hat C$ such that
   \begin{equation}\label{88c1}
   r_k=\frac{dist(x^k,T_\la)}{2}\geq \hat C>0,~k\in \mathbb N.
   \end{equation}
   Otherwise, $\{x^k\}$ will  converge to a point on $T_\la$.  Then, by the uniform continuity of $W_\lambda$, we have
   \begin{equation*} 
    W_\la(x^k)\rightarrow 0, \,\,\mbox{as}\,\, k\to \infty.
   \end{equation*}
  This contradicts  assumption \eqref{mit}. Hence, \eqref{88c1} holds.

To proceed with the proof,  we chose a sequence $\varepsilon_k>0$ with $\varepsilon_k \to 0$  such that
  $$ W_\la(x^k)+\varepsilon_k\Psi_k(x^k)>A\geq W_\la(x)+\varepsilon_k\Psi_k(x),~\forall~x\in \Sigma_\la  \backslash B_{r_k}(x^k).$$
  Hence, there exists a point $ \bar x^k \in B_{r_k}(x^k) \subset \Omega$ satisfying
  $$ W_\la(\bar x^k)+\varepsilon_k\Psi_k(\bar x^k)=\underset{\Sigma_\la}{\max} (W_\la( x )+\varepsilon_k\Psi_k(  x ))>A.$$
On the other hand, a straightforward calculation shows that
 \begin{equation}\label{c1}\aligned
 &(-\lap)^s (W_\la(\bar x^k)+\varepsilon_k\Psi_k(\bar x^k) )-\lap (W_\la(\bar x^k)+\varepsilon_k\Psi_k(\bar x^k))\\
 \geq &C_{n,s}P.V.\int_{\mathbb R^n} \frac{W_\la(\bar x^k)+\varepsilon_k\Psi_k(\bar x^k)-W_\la(y)-\varepsilon_k\Psi_k(y)}{|\bar x^k -y|^{n+2s}} dy\\
 =&C_{n,s}P.V.\int_{\Sigma_\la} \frac{W_\la(\bar x^k)+\varepsilon_k\Psi_k(\bar x^k)-W_\la(y)-\varepsilon_k\Psi_k(y)}{|\bar x^k -y|^{n+2s}} dy\\
 &+C_{n,s} \int_{\Sigma_\la} \frac{W_\la(\bar x^k)+\varepsilon_k\Psi_k(\bar x^k)-W_\la(y^\la)-\varepsilon_k\Psi_k(y^\la)}{|\bar x^k -y^\la|^{n+2s}} dy\\
 \geq &C_{n,s} \int_{\Sigma_\la} \frac{2(W_\la(\bar x^k)+\varepsilon_k\Psi_k(\bar x^k)) -\varepsilon_k\Psi_k(y )}{|\bar x^k -y^\la|^{n+2s}} dy\\
 \geq &C_{n,s} C \int_{\Sigma_\la} \frac{2(W_\la(\bar x^k)+\varepsilon_k\Psi_k(\bar x^k)) -\varepsilon_k\Psi_k(y )}{| x^k -y^\la|^{n+2s}} dy\\
  \geq &C_{n,s}C\int_{\Sigma_\la}\frac{2A-\varepsilon_k}{| x^k -y|^{n+2s}} dy\\
 \geq &\frac{C_{n,s} C(2A-\varepsilon_k)}{r_k^{2s}}\int_{\mathbb R^n \backslash\Sigma_2} \frac{1}{|z|^{n+2s}} dz\\
 \geq & \tilde C \frac{2A-\varepsilon_k }{r_k^{2s}},
 \endaligned\end{equation}
 where  $z=\frac{x^k-y^\la}{r_k},$ and we have used the fact that 
 $$ |\bar x^k-y^\la|\leq |x^k-y^\la|+|\bar x^k-x^k|\leq \frac{3}{2}|x^k-y^\la|.$$

On the  other hand, recalling that $\underset{\Omega}{\sup}~W_\la(x)=\underset{\Sigma_\lambda}{\sup}~W_\la(x)=A>0$
 and $r_k\geq \hat C>0$, 
 we can deduce from \eqref{diffe} that 
   \begin{equation}\label{c3}\aligned
 &(-\lap)^s (W_\la(\bar x^k)+\varepsilon_k\Psi_k(\bar x^k) )-\lap (W_\la(\bar x^k)+\varepsilon_k\Psi_k(\bar x^k))\\
 \leq & C_0(x)f(\bar x^k_1)W_\la(\bar x^k)+ \varepsilon_k(-\lap)^s \Psi_k(\bar x^k)-\varepsilon_k \lap \Psi_k(\bar x^k)\\
 \leq &C_0(x)f(\bar x^k_1) W_\la(\bar x^k)+\frac{ c_1\varepsilon_k}{r_k^{2s}}+\frac{ c_2\varepsilon_k}{r_k^{2}}\\
 \leq & C_0(x)f(\bar x^k_1) A+  \frac{ c_3\varepsilon_k}{r_k^{2s}}.
 \endaligned\end{equation}
 Combining  \eqref{c1} with \eqref{c3}, we obtain
 $$\tilde C  \frac{2A-\varepsilon_k}{r_k^{2s}}\leq C_0(x)f(\bar x^k_1) A  +\frac{c_3\varepsilon_k}{r_k^{2s}}  ,$$
 therefore,
 $$ 0< 2A \tilde C    \leq C_0(x)f(\bar x^k_1) A r_k^{2s}+ \tilde C \varepsilon_k +c_3\varepsilon_k \leq C_0(x)A f(\bar x^k_1) |\bar x^k_1|^{2s}+ \tilde C \varepsilon_k +c_3\varepsilon_k.$$
Since $\lambda$ is sufficiently negative, 
letting $\varepsilon_k\rightarrow 0$ when $k\rightarrow \infty$ and using \eqref{pp},   we arrive at a contradiction. Therefore, \eqref{v11} is established and the proof of Lemma \ref{lem3} is complete. 
\end{proof}

\begin{lemma}\label{mollification}
Assume that $\varphi$ is the first eigenfunction associated with $(-\Delta)^s-\lap$ in $B_1(0)$ and satisfies
\begin{eqnarray*}
\left\{\begin{array}{ll}
 ( - \Delta )^s \varphi (x)-\lap \varphi (x) =\lambda_1  \varphi (x),& x \in B_1(0),\\
\varphi (x)=0,& x \in B^c_1(0).
\end{array} \right.
\end{eqnarray*}
Let $\rho  \in C^\infty_0(B_1(0))$ be a standard mollifier such that  $\int_{B_1(0)} \rho (x)dx=1$.  Then, the  following identity holds
\begin{eqnarray}\label{eq2-mthm}
\int_{\mathbb{R}^n} ((-\Delta)^s -\lap) \varphi(z) \rho  (x-z)dz =\int_{\mathbb{R}^n} \varphi(z)  ((-\Delta)^s -\lap) \rho  (x-z)dz,
\end{eqnarray}
and
\begin{eqnarray}\label{inter}
(( - \Delta ) ^s-\lap) \varphi_1 (x) \leq \lambda_1  \varphi_1 (x),\,\, x \in \mathbb R^n,
\end{eqnarray}
where  $\varphi_1 (x)=(\rho \ast \varphi)(x)= (\varphi \ast \rho ) (x)$  denotes the mollification of $\varphi$.
\end{lemma}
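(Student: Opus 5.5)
We prove \eqref{eq2-mthm} by splitting the mixed operator into its local and nonlocal parts, and then deduce \eqref{inter} from it. The step we expect to be the main obstacle is the boundary behaviour of $\varphi$: it is only $C^{0,\gamma}$ across $\partial B_1(0)$, so neither $(-\Delta)^s\varphi$ nor $\Delta\varphi$ is a function on all of $\mathbb R^n$.

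For the local part we use Green's identity. The mollifier $\rho(x-\cdot)$ is smooth with compact support. We integrate by parts on $B_1(0)$, where $\varphi$ is regular. At the boundary, $\varphi=0$ removes the term $\rho\,\partial_\nu\varphi$ only in the distributional sense.

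This is where care is needed. Consider the left side of \eqref{eq2-mthm} with $((-\Delta)^s-\Delta)\varphi$ taken pointwise. Outside the ball, $-\Delta\varphi=0$ and $(-\Delta)^s\varphi(z)=-C_{n,s}\int_{B_1}\varphi(y)|z-y|^{-n-2s}dy\le 0$. The distributional Laplacian also carries a singular part on $\partial B_1$, namely $-\partial_\nu\varphi\,d\sigma$ from the jump in the normal derivative. By the Hopf lemma this part is a nonnegative measure: $\partial_\nu\varphi\le0$ for the outward normal, so $-\Delta\varphi$ picks up $+|\partial_\nu\varphi|\,d\sigma\ge0$.

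Accordingly, we interpret the left side of \eqref{eq2-mthm} as $\varphi_1$-level distributional pairing. We write $((-\Delta)^s-\Delta)\varphi=T$ as a distribution and verify
\[
\langle T,\rho(x-\cdot)\rangle=\int\varphi(z)\,((-\Delta)^s-\Delta)\rho(x-z)\,dz
\]
by the definition of distributional derivatives. For the nonlocal part we use symmetry of the kernel. Using the $\varepsilon$-truncated form
\[
\int\!\!\int_{|z-y|>\varepsilon}\frac{(\varphi(z)-\varphi(y))\rho(x-z)}{|z-y|^{n+2s}}\,dy\,dz ,
\]
we symmetrize in $(z,y)$ to move the difference onto $\rho$. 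Then we let $\varepsilon\to0$ with dominated convergence, which is justified since $\rho$ is $C^2$ and $\varphi$ is bounded, Hölder continuous and compactly supported.

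Next we show that the right side of \eqref{eq2-mthm} equals $((-\Delta)^s-\Delta)\varphi_1(x)$. This holds because $\varphi_1=\varphi*\rho$ is smooth, and the operator, being translation invariant, commutes with convolution. Concretely, we differentiate under the integral for $\Delta$ and use the change of variables $y\mapsto y-z$ in the singular integral for $(-\Delta)^s$.

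Finally, for \eqref{inter}, we decompose the distribution $T$ as $\lambda_1\varphi\,\chi_{B_1}+\mu$. Here $\mu$ consists of three pieces:
\begin{itemize}
\item the nonpositive function $(-\Delta)^s\varphi$ on $B_1^c$;
\item the boundary measure coming from the Laplacian;
\item a possible boundary contribution of $(-\Delta)^s$, which is integrable, since $\varphi\sim d^{\min(1,s)}$-type behaviour gives no singular mass.
\end{itemize}

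The sign of the Laplacian boundary measure is the delicate point. The nonnegative boundary measure $|\partial_\nu\varphi|\,d\sigma$ would give the wrong sign for \eqref{inter}. We therefore recheck the orientation: $\varphi\ge0$ vanishes outside, so $\varphi$ has a "kink" at $\partial B_1$. Distributionally, $\Delta\varphi=\Delta\varphi\chi_{B_1}+|\partial_\nu\varphi|\,d\sigma$, hence $-\Delta\varphi$ carries $-|\partial_\nu\varphi|\,d\sigma\le0$. The correct sign is therefore favourable, and $\mu\le0$.

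Pairing with $\rho(x-\cdot)\ge0$ then gives
\[
((-\Delta)^s-\Delta)\varphi_1(x)\le\lambda_1(\varphi\chi_{B_1})*\rho(x)=\lambda_1\varphi_1(x),
\]
which is \eqref{inter}.
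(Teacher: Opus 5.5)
Your argument is correct once the sign slip noted below is removed, and it differs from the paper's in a way that matters.

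\textbf{How the two proofs differ.} The paper treats $((-\Delta)^s-\Delta)\varphi$ as a pointwise function off $\partial B_1$. It dominates the truncated operators by $C\,\mathrm{dist}(x,\partial B_1)^{-s}$ outside the ball and by interior $C^{1,1}$ bounds inside. It then exchanges the $\epsilon$-limit with the integral, symmetrizes the nonlocal part by Fubini, and simply asserts $\int v\,\Delta\varphi=\int\varphi\,\Delta v$. Finally, \eqref{inter} comes from splitting into $B_1$ and $B_1^c$, using the equation inside and $(-\Delta)^s\varphi\le 0$ outside.

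You instead read \eqref{eq2-mthm} distributionally. You put the work into identifying $T=((-\Delta)^s-\Delta)\varphi$ as $\lambda_1\varphi\chi_{B_1}$ plus a nonpositive remainder. That extra care is warranted. The function $\varphi$ extended by zero has a gradient jump across $\partial B_1$ (by Hopf, $\partial_\nu\varphi<0$ there). Green's identity on $B_1$ then gives
\[
\int_{\mathbb R^n}\varphi\,\Delta v\,dz=\int_{B_1}v\,\Delta\varphi\,dz+\int_{\partial B_1}v\,|\partial_\nu\varphi|\,d\sigma .
\]
So with a pointwise left-hand side, \eqref{eq2-mthm} is off by $-\int_{\partial B_1}\rho(x-z)\,|\partial_\nu\varphi(z)|\,d\sigma(z)$. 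The paper's route is more hands-on but silently drops this single layer; yours keeps it. Because the layer has the favourable sign, \eqref{inter} holds either way.

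\textbf{What to fix in the write-up.}
\begin{itemize}
\item Delete your first orientation claim (the one giving $+|\partial_\nu\varphi|\,d\sigma$ in $-\Delta\varphi$) and keep only the identity above.
\item The condition $\varphi=0$ on $\partial B_1$ kills $\varphi\,\partial_\nu\rho$, not $\rho\,\partial_\nu\varphi$. The latter is exactly the surviving layer.
\item $\varphi\in C^{1,\kappa}(\overline{B_1})$ is Lipschitz across $\partial B_1$ with $\varphi\sim d$, not merely $C^{0,\gamma}$ or $d^{\min(1,s)}$.
\item In your reading, \eqref{eq2-mthm} is essentially the definition of $T$. The real content is the claim that $(-\Delta)^s\varphi$ puts no mass on $\partial B_1$. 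Your dominated-convergence remark only covers the side where the difference falls on $\rho$.
\item On the $\varphi$ side you need a bound uniform in $\epsilon$ for the truncated operators. Writing $d(z)=\mathrm{dist}(z,\partial B_1)$, a suitable bound is $|(-\Delta)^s_\epsilon\varphi(z)|\le C\bigl(1+d(z)^{1-2s}\bigr)$ for $s>\tfrac12$. At $s=\tfrac12$ the bound is logarithmic, and for $s<\tfrac12$ it is a constant.
\item This bound follows from the global Lipschitz bound together with the $C^{1,\kappa}$ bound near $z$, taking $1+\kappa>2s$. It is locally integrable, which is what lets you pass to the limit on the $\varphi$ side.
\end{itemize}
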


\begin{proof}
 We first estimate  $((-\Delta)^s-\lap)\varphi(x)$ in $ B_1^c(0),$ that is,  we  show that
\begin{eqnarray}\label{eq2-16}
\left| ((-\Delta)^s -\lap)\varphi(x) \right| \leq C\frac{1}{dist^s(x, \partial B)},\,\, x\in B_1^c(0).
\end{eqnarray}
According to \cite[Theorem B.1]{SVWZ},  the eigenfunction satisfies $\varphi(x)\in C^{2,\kappa}_{loc}(B_1(0))\cap C^{1,\kappa}(\overline {B_1(0)})$ for any  $\kappa\in(0,1)$.
Now for $x \in \mathbb{R}^n \backslash \overline {B_1(0)}, $ since $\varphi(x)=0$, we have 
\begin{eqnarray}\label{eq2-14}
\left |( (-\Delta)^s-\lap) \varphi(x)\right|
&=& C_{n, s}\left | PV \int_{\mathbb{R}^n}\frac{0-\varphi(y)}{|x-y|^{n+2s}}dy\right|\nonumber\\
&=&C_{n, s}\left | \int_{B_1(0)}\frac{\varphi(y)}{|x-y|^{n+2s}}dy\right|\nonumber\\ 
&\leq &C \int_{B_1(0)}\frac{(dist(y, \partial B))^{s}}{|x-y|^{n+2s}}dy\nonumber\\
&\leq & C \int_{B_1(0)}\frac{1}{|x-y|^{n+s}}dy.
\end{eqnarray}
Next, we estimate the last integral in \eqref{eq2-14}.
Denote $x=(x_1, 0'), \,\, x_1<0,$ and 
$$
D=\{y \mid 0 < y_1< 2,\,\, |y'|<1 \}.
$$
We then obtain 
\begin{eqnarray}\label{eq2-15}
\int_{B_1(0)}\frac{1}{|x-y|^{n+s}}dy && =\int_{B_1(1,0') }\frac{1}{|x-(y_1-1,y')|^{n+s}} dy \nonumber\\
&\leq& \int_{D} \frac{1}{|x-(y_1-1,y')|^{n+s}} dy\nonumber\\
&=& C \int_0^2 \int_0^1 \frac{r^{n-2}}{(r^2+(y_1-x_1-1)^2)^{\frac{n+2s-1-\kappa}{2}}}drdy_1\nonumber\\
&=& C \int_0^2 \frac{1}{|y_1-x_1-1|^{s+1}}dy_1\nonumber\\
& \leq  & C \frac{1}{|x_1|^{s}}.
\end{eqnarray}
Combining \eqref{eq2-14} with \eqref{eq2-15}, we arrive at \eqref{eq2-16}.

Secondly, we prove \eqref{eq2-mthm}. For simplicity, let
$v(z)= \rho (x-z).$ Therefore, we need to show that
\begin{eqnarray*}
\int_{\mathbb{R}^n} ((-\Delta)^s -\lap)\varphi(x) v(x)dx =\int_{\mathbb{R}^n} \varphi(x)  ((-\Delta)^s-\lap) v(x)dx.
\end{eqnarray*}
By the definition of the fractional Laplacian, the above equality is equivalent to
\begin{eqnarray}\label{eq2-17}\aligned
&\int_{\mathbb{R}^n}  v(x)\big[\lim_{\epsilon \to 0}\int_{|y-x|\geq \epsilon} \frac{\varphi(x)-\varphi(y)}{|x-y|^{n+2s}}dy-\lap \varphi(x)\big] dydx \\
=&\int_{\mathbb{R}^n} \varphi(x) \big[ \lim_{\epsilon \to 0}\int_{|y-x|\geq \epsilon}\frac{v(x)-v(y)}{|x-y|^{n+2s}}dy-\lap v(x)\big] dydx.
\endaligned\end{eqnarray}
Hence, it suffices to prove \eqref{eq2-17} in this step.

We analyze the two integrals on both sides of
  of \eqref{eq2-17} respectively. We prove that the limit as $\epsilon \rightarrow 0$ can be interchanged with the integral over $y$.

We first consider the integral on the left-hand side of \eqref{eq2-17} in $B^c_1(0)$ and $B_1(0)$ respectively. If $x \in \mathbb{R}^n \backslash \overline{B_1(0)},$ by \eqref{eq2-16}, we derive
$$
\left| \int_{|y-x|\geq \epsilon} \frac{\varphi(x)-\varphi(y)}{|x-y|^{n+2s}}dy-\lap \varphi(x) \right|=\left| \int_{|y-x|\geq \epsilon} \frac{\varphi(x)-\varphi(y)}{|x-y|^{n+2s}}dy  \right| \leq \frac{C}{dist^s(x, \partial B_1(0))}.
$$
By Lebesgue's dominated convergence theorem, we have
\begin{eqnarray}\label{eq2-19}\aligned
&\int_{B_1^c(0)} v(x) \big[\lim_{\epsilon \to 0}\int_{|y-x|\geq \epsilon} \frac{\varphi(x)-\varphi(y)}{|x-y|^{n+2s}}dy-\lap \varphi (x) \big]dx\\
=&\lim_{\epsilon \to 0} \int_{B_1^c(0)}v(x) \int_{|y-x|\geq \epsilon} \frac{\varphi(x)-\varphi(y)}{|x-y|^{n+2s}}dy dx-\int_{B_1^c(0)} v(x)\lap \varphi(x)dx.
\endaligned\end{eqnarray}

  If $x \in B_{1-\delta}(0),$ for any fixed $0<\delta <1$, by $\varphi(x)\in C^{2,\kappa}_{loc}(B_1(0))\cap C^{1,\kappa}(\overline{B_1(0)})$, we obtain
$$
\left| \int_{|y-x|\geq \epsilon} \frac{\varphi(x)-\varphi(y)}{|x-y|^{n+2s}}dy-\lap \varphi(x) \right| \leq C \|\varphi\|_{C^{1,1}(B_{1-\delta}(0))}.
$$
Similarly, by Lebesgue's dominated convergence theorem, one has
\begin{eqnarray*}\aligned
&\int_{B_{1-\delta}(0)} v(x)\big[\lim_{\epsilon \to 0}\int_{|y-x|\geq \epsilon} \frac{\varphi(x)-\varphi(y)}{|x-y|^{n+2s}}dy -\lap \varphi (x)\big]dx\\
=&\lim_{\epsilon \to 0} \int_{B_{1-\delta}(0)}v(x) \int_{|y-x|\geq \epsilon} \frac{\varphi(x)-\varphi(y)}{|x-y|^{n+2s}}dy dx-\int_{B_{1-\delta}(0)}v(x) \lap \varphi(x)dx.
\endaligned\end{eqnarray*}
Let $\delta \to 0,$ we obtain
\begin{eqnarray}\label{eq2-20}\aligned
&\int_{B_1(0)} v(x)\big [\lim_{\epsilon \to 0}\int_{|y-x|\geq \epsilon} \frac{\varphi(x)-\varphi(y)}{|x-y|^{n+2s}}dy-\lap \varphi(x) \big]dx\\
=&\lim_{\epsilon \to 0} \int_{B_1(0)}v(x) \int_{|y-x|\geq \epsilon} \frac{\varphi(x)-\varphi(y)}{|x-y|^{n+2s}}dy dx-\int_{B_1(0)} v(x)\lap \varphi(x) dx.
\endaligned\end{eqnarray}

Combining \eqref{eq2-19} with \eqref{eq2-20}, we obtain that the limit as $\epsilon\rightarrow 0$ can be interchanged with the integral over $x$ on the left-hand side of \eqref{eq2-17}, more precisely,
\begin{eqnarray}\label{eq2-21}\aligned
&\int_{\mathbb{R}^n}v(x) \big[\lim_{\epsilon \to 0}\int_{|y-x|\geq \epsilon} \frac{\varphi(x)-\varphi(y)}{|x-y|^{n+2s}}dy-\lap \varphi(x)\big] dx\\
=&\lim_{\epsilon \to 0} \int_{\mathbb{R}^n}v(x) \int_{|y-x|\geq \epsilon} \frac{\varphi(x)-\varphi(y)}{|x-y|^{n+2s}}dydx- \int_{\mathbb{R}^n}v(x)\lap \varphi(x)dx.
\endaligned\end{eqnarray}

Then  we consider the integral on the right-hand side of \eqref{eq2-17}.  By Lebesgue's dominated convergence theorem, it is easy to see that the limit as $\epsilon\rightarrow 0$ and the integral over $x$ on the right-hand side of \eqref{eq2-17} can be interchanged, $i.e.$,
\begin{eqnarray}\label{eq2-18}\aligned
&\int_{\mathbb{R}^n} \varphi(x) \big[ \lim_{\epsilon \to 0}\int_{|y-x|\geq \epsilon}\frac{v(x)-v(y)}{|x-y|^{n+2s}}dy-\lap v(x)\big]dx\\
=&\lim_{\epsilon \to 0} \int_{\mathbb{R}^n} \varphi(x)  \int_{|y-x|\geq \epsilon}\frac{v(x)-v(y)}{|x-y|^{n+2s}}dydx-\int_{\mathbb R^n}\varphi(x)\lap v(x)dx.
\endaligned\end{eqnarray}

To conclude the proof of \eqref{eq2-17},  by $ \int_{\mathbb R^n}v(x)\lap \varphi(x)dx=\int_{\mathbb R^n}\varphi(x)\lap v(x)dx$, \eqref{eq2-21} and \eqref{eq2-18}, it remains to show that
$$\int_{\mathbb{R}^n} v(x)  \int_{|y-x|\geq \epsilon}\frac{\varphi(x)-\varphi(y)}{|x-y|^{n+2s}}dydx- \int_{\mathbb{R}^n} \varphi(x)  \int_{|y-x|\geq \epsilon}\frac{v(x)-v(y)}{|x-y|^{n+2s}}dydx=0.$$
This follows  from Fubini's Theorem that
\begin{eqnarray*}
 &&\int_{\mathbb{R}^n} v(x)  \int_{|y-x|\geq \epsilon}\frac{\varphi(x)-\varphi(y)}{|x-y|^{n+2s}}dydx- \int_{\mathbb{R}^n} \varphi(x)  \int_{|y-x|\geq \epsilon}\frac{v(x)-v(y)}{|x-y|^{n+2s}}dydx\\
 &=&\int_{\mathbb{R}^n}\int_ {|y-x|\geq\epsilon}\frac{\varphi(x)v(x)-\varphi(y)v(x)}{\mid x-y\mid^{n+2s}}dydx-\int_{\mathbb{R}^n}\int_ {|y-x|\geq\epsilon}\frac{\varphi(x)v(x)-\varphi(x)v(y)}{\mid x-y\mid^{n+2s}}dydx\\
 &=&\int_{\mathbb{R}^n}\int_ {|y-x|\geq\epsilon}\frac{-\varphi(y)v(x)}{\mid x-y\mid^{n+2s}}dydx-\int_{\mathbb{R}^n} \int_ {|y-x|\geq\epsilon}\frac{-\varphi(x)v(y)}{\mid x-y\mid^{n+2s}}dydx\\
 &=&-\int_{\mathbb{R}^n}\int_ {|y-x|\geq\epsilon}\frac{\varphi(y)v(x)}{\mid x-y\mid^{n+2s}}dydx+\int_{\mathbb{R}^n} \int_ {|y-x|\geq\epsilon}\frac{\varphi(y)v(x)}{\mid x-y\mid^{n+2s}}dydx\\
 &=&0.
 \end{eqnarray*}
Hence, \eqref{eq2-17} holds and \eqref{eq2-mthm} is verified.

Finally, we prove \eqref{inter}.

By the definitions of the fractional Laplacian and Laplacian, and the mollifier, we obtain
\begin{eqnarray*}
&&(( - \Delta ) ^s -\lap )\varphi_1 (x)\\
&=&C_{n,s} PV \int_{\mathbb{R}^n} \frac{\int_{\mathbb{R}^n}\rho (x-z)\varphi(z)dz-\int_{\mathbb{R}^n}\rho (y-z)\varphi(z)dz}{|x-y|^{n+2s}}  dy-\lap \varphi_1(x)\\
&=& \int_{\mathbb{R}^n} \varphi(z) \big[(-\Delta)_x^s \rho (x-z)-\lap_x \rho(x-z)\big]dz\\
&=& \int_{\mathbb{R}^n} \varphi(z)\big[ (-\Delta)_z^s \rho (x-z)-\lap_z(x-z)\big]dz\\
&=& \int_{B_1(0)}  ((-\Delta)_z^s-\lap_z) \varphi(z) \rho (x-z)dz+\int_{\mathbb{R}^n \backslash B_1(0)}  ((-\Delta)_z^s-\lap_z) \varphi(z) \rho (x-z)dz\\
&\leq & \int_{B_1(0)}  ((-\Delta)_z^s-\lap_z) \varphi(z) \rho (x-z)dz\\
&= &\int_{B_1(0)}  \lambda_1 \varphi(z) \rho (x-z)dz\\
&=&\lambda_1 \varphi_1 (x).
\end{eqnarray*}
Therefore, we obtain \eqref{inter}, and  complete the proof of Lemma \ref{mollification}.
\end{proof}

\begin{remark}\label{mrem-inter}
It follows from the proof of the above lemma that if $u \in C^{2} (\mathbb{R}^n)\cap   {\mathcal L}_{2s } $  and $v\in C_0^\infty(\mathbb{R}^n),$ the following identity holds
$$
\int_{\mathbb{R}^n}((-\lap)^s-\lap) u(x)\, v(x)dx=\int_{\mathbb{R}^n} u(x)\, ((-\lap)^s -\lap)v(x)dx.
$$
\end{remark}

\section{Liouville theorem for  mixed local and nonlocal indefinite elliptic equations}

In this section, we aim to establish the monotonicity of positive solutions to the mixed local and nonlocal elliptic equation \eqref{v1} with indefinite nonlinearities. We then proceed to prove the Liouville theorem for positive solutions of \eqref{v1}, namely Theorems \ref{mthm12} and \ref{thmf10}. Before doing so, we first derive a key differential inequality.

\begin{lemma}\label{lem4}
Suppose that $w_\lambda < 0$ somewhere in $\Sigma_\lambda$, for some $\lambda \in \mathbb{R}$. Define
\[
v_\lambda(x) := w_\lambda(x)\, \chi_{E_\lambda}(x),
\]
where
\[
\chi_{E_\lambda}(x) :=
\begin{cases}
1, & \text{if } x \in E_\lambda, \\
0, & \text{if } x \notin E_\lambda,
\end{cases}
\quad \text{with} \quad E_\lambda := \{x \in \Sigma_\lambda \mid w_\lambda(x) < 0\}.
\]
Then, the following inequality holds:
\begin{equation}\label{1c00}
(-\Delta)^s v_\lambda(x) - \Delta v_\lambda(x) - \left[(-\Delta)^s w_\lambda(x) - \Delta w_\lambda(x)\right] \geq 0, \quad \text{for } x \in E_\lambda.
\end{equation}
\end{lemma}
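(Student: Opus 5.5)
Fix $x\in E_\lambda$. Since $w_\lambda$ is continuous, $E_\lambda$ is open, so $v_\lambda=w_\lambda$ in a neighborhood of $x$. Hence $\Delta v_\lambda(x)=\Delta w_\lambda(x)$, and the local parts cancel in \eqref{1c00}. It therefore suffices to show
\[
(-\Delta)^s v_\lambda(x)-(-\Delta)^s w_\lambda(x)\ge 0 .
\]
Both fractional Laplacians are well defined at $x$. Indeed, $w_\lambda$ is $C^2$ and bounded (because $u$ is), and $v_\lambda$ coincides with $w_\lambda$ near $x$ and is bounded. Since $v_\lambda(x)=w_\lambda(x)$, the principal values combine into the absolutely convergent integral
\[
(-\Delta)^s v_\lambda(x)-(-\Delta)^s w_\lambda(x)=C_{n,s}\int_{\mathbb R^n}\frac{w_\lambda(y)-v_\lambda(y)}{|x-y|^{n+2s}}\,dy .
\]
The integrand vanishes near $x$, so there is no singularity.

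Next I split the integral over $\Sigma_\lambda$ and its reflection, using $w_\lambda(y^\lambda)=-w_\lambda(y)$ and the fact that $v_\lambda(y^\lambda)=0$ for $y\in\Sigma_\lambda$. The reflection $y^\lambda$ lies outside $\Sigma_\lambda$ (or on $T_\lambda$, a null set), and $v_\lambda$ vanishes off $E_\lambda\subset\Sigma_\lambda$. Inside $\Sigma_\lambda$, the difference $w_\lambda-v_\lambda$ equals $w_\lambda\chi_{\Sigma_\lambda\setminus E_\lambda}\ge0$. This gives
\[
C_{n,s}\int_{\Sigma_\lambda\setminus E_\lambda}\frac{w_\lambda(y)}{|x-y|^{n+2s}}dy-C_{n,s}\int_{\Sigma_\lambda}\frac{w_\lambda(y)}{|x-y^\lambda|^{n+2s}}dy .
\]
I then split the second integral into the parts over $E_\lambda$ and over $\Sigma_\lambda\setminus E_\lambda$.

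On $E_\lambda$ we have $w_\lambda<0$, so $-\int_{E_\lambda} w_\lambda(y)|x-y^\lambda|^{-n-2s}\,dy\ge0$. On $\Sigma_\lambda\setminus E_\lambda$ we have $w_\lambda\ge0$, and for $x,y\in\Sigma_\lambda$ the reflection satisfies $|x-y^\lambda|\ge|x-y|$. Hence the remaining terms combine to
\[
\int_{\Sigma_\lambda\setminus E_\lambda}w_\lambda(y)\Big(\frac1{|x-y|^{n+2s}}-\frac1{|x-y^\lambda|^{n+2s}}\Big)dy\ge0 .
\]
Summing the two nonnegative contributions yields \eqref{1c00}.

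The only delicate point is justifying that the principal values can be subtracted and rearranged. This is handled by noting that $w_\lambda-v_\lambda$ vanishes in a ball around $x$ and is bounded, so every integral above converges absolutely and the rearrangement is legitimate. The reflection inequality $|x-y^\lambda|\ge|x-y|$ is elementary: the first coordinates satisfy $|x_1-(2\lambda-y_1)|=(\lambda-x_1)+(\lambda-y_1)\ge|x_1-y_1|$.
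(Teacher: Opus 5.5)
Your proposal is correct and follows essentially the same route as the paper. The paper works with $V=v_\lambda-w_\lambda$ (the negative of your $w_\lambda-v_\lambda$), uses $V\equiv 0$ on $E_\lambda$ to kill the local term and $V(x)=0$ to write $(-\Delta)^sV(x)$ as an integral of $-V$, and then splits over $\Sigma_\lambda$ and its reflection using antisymmetry and $|x-y|<|x-y^\lambda|$. Your write-up is actually a bit more careful than the paper's: it keeps the nonnegative $E_\lambda$-contribution that the paper's displayed equality silently drops, and it gets the sign of $w_\lambda-v_\lambda$ on $\Sigma_\lambda\setminus E_\lambda$ right, whereas the paper states $V\ge 0$ there when in fact $V=-w_\lambda\le 0$.
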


\begin{proof}
Let $$V(x) = v_\lambda(x) - w_\lambda(x).$$ Clearly, \eqref{1c00} is equivalent to
\[
(-\Delta)^s V(x) - \Delta V(x) \geq 0 \quad \text{in } E_\lambda.
\]
Denote by $\Gamma_\lambda$ the reflection of the set $E_\lambda$ with respect to the hyperplane $T_\lambda$. Since $V \equiv 0$ in $E_\lambda$ and $V \geq 0$ in $\Sigma_\lambda \setminus E_\lambda$, we deduce for any $x \in E_\lambda$ that
\[
-\Delta V(x) = 0,
\]
and
\[
\begin{aligned}
(-\Delta)^s V(x) &= \int_{\mathbb{R}^n} \frac{-V(y)}{|x - y|^{n + 2s}}\,dy \\
&= \left( \int_{\Sigma_\lambda \setminus E_\lambda} + \int_{\Gamma_\lambda} + \int_{\Sigma_\lambda^c \setminus \Gamma_\lambda} \right) \frac{-V(y)}{|x - y|^{n + 2s}}\,dy \\
&= \int_{\Sigma_\lambda \setminus E_\lambda} \frac{-V(y)}{|x - y|^{n + 2s}}\,dy + \int_{E_\lambda} \frac{-V(y^\lambda)}{|x - y^\lambda|^{n + 2s}}\,dy \\
&\quad + \int_{\Sigma_\lambda^c \setminus E_\lambda} \frac{-V(y^\lambda)}{|x - y^\lambda|^{n + 2s}}\,dy \\
&= \int_{\Sigma_\lambda \setminus E_\lambda} (-V(y)) \left[ \frac{1}{|x - y|^{n + 2s}} - \frac{1}{|x - y^\lambda|^{n + 2s}} \right] dy \\
&\geq 0,
\end{aligned}
\]
where in the last inequality we have used the fact that $|x - y| < |x - y^\lambda|$ for all $x \in E_\lambda$ and $y \in \Sigma_\lambda$.

Therefore, inequality \eqref{1c00} holds, and the proof of Lemma~\ref{lem4} is complete.
\end{proof}

\begin{proof}
[Proof of Theorem \ref{mthm12}.] 
Let $u(x)$ be a positive bounded classical solution of equation \eqref{v1}, and define
\[
u_\lambda(x) = u(x^\lambda), \quad w_\lambda(x) = u_\lambda(x) - u(x), \quad x \in \mathbb{R}^n.
\]
We aim to prove that $u$ is strictly increasing in the $x_1$-direction, that is,
\[
w_\lambda(x) > 0, \quad \forall\, x \in \Sigma_\lambda,\ \text{for all } \lambda \in \mathbb{R}.
\]
The proof is divided into the following three steps.

\textit{Step 1}. We show that for $\lambda$ sufficiently negative,
\begin{equation}\label{eq2w}
w_\lambda(x) \geq 0, \quad \forall\, x \in \Sigma_\lambda.
\end{equation}
From assumptions \textnormal{(F1)} and \textnormal{(F2)}, and using equation \eqref{v1}, we have
\begin{equation}\label{eq2w1}
\begin{aligned}
(-\Delta)^s w_\lambda(x) - \Delta w_\lambda(x) &= f(x_1^\lambda)g(u_\lambda(x)) - f(x_1)g(u(x)) \\
&= \big(f(x_1^\lambda) - f(x_1)\big)g(u_\lambda(x)) + f(x_1)\big(g(u_\lambda(x)) - g(u(x))\big) \\
&\geq f(x_1)M_\lambda(x)w_\lambda(x),
\end{aligned}
\end{equation}
where 
\[
M_\lambda(x) = \frac{g(u_\lambda(x)) - g(u(x))}{u_\lambda(x) - u(x)}.
\]
Since $u$ is bounded and $g(\cdot)$ is locally Lipschitz continuous and nondecreasing, it follows that
\begin{equation}\label{eq2w3}
M_\lambda(x) \text{ is bounded and nonnegative.}
\end{equation}

We proceed by contradiction. Suppose that there exists $\lambda \leq 0$ such that $w_\lambda(x) < 0$ somewhere in $\Sigma_\lambda$. Then we can define the nonempty open set
\[
E_\lambda = \{x \in \Sigma_\lambda \mid w_\lambda(x) < 0\},
\]
and introduce the auxiliary function
$$v_\la(x)=w_\la(x) \chi_{E_\la(x)}\leq 0,$$
where $$\chi_{E_\la}(x) =\left\{\begin{array}{ll}
1,& \quad  x \in  E_\la,\\
 0,&\quad  x \not \in E_\la  .
\end{array}\right.$$

Obviously, for $x \in \Sigma_\lambda$ we have $x_1^\lambda > x_1$. Using the monotonicity of both $f$ and $g$, as well as the nonnegativity of $g$, it follows that for $x \in E_\lambda$, similarly to \eqref{eq2w1},  
\begin{equation}\label{eq:o110}
(-\Delta)^s w_\lambda(x) - \Delta w_\lambda(x) 
\geq  f(x_1) M_\lambda(x) w_\lambda(x):= C_0(x) f(x_1) w_\lambda(x),
\end{equation}
where $C_0(x)$ is bounded and nonnegative by \eqref{eq2w3}.

Employing Lemma \ref{lem4} and \eqref{eq:o110}, we obtain
\begin{equation*}
(-\Delta)^s v_\lambda(x) - \Delta v_\lambda(x) \geq (-\Delta)^s w_\lambda(x) - \Delta w_\lambda(x) \geq C_0(x) f(x_1) w_\lambda(x), \quad x \in E_\lambda.
\end{equation*}

Since $v_\lambda = 0$ for $x \in \mathbb{R}^n \setminus E_\lambda$, applying the maximum principle for the mixed local and nonlocal operator in unbounded domains (Lemma \ref{lem3}) yields $w_\lambda \geq 0$ in $E_\lambda$. This contradicts the assumption that $w_\lambda < 0$ somewhere in $\Sigma_\lambda$, verifying that $E_\lambda$ must be empty. Hence, $w_\lambda \geq 0$ in $\Sigma_\lambda$ for sufficiently negative $\lambda$, that is, \eqref{eq2w} holds.

\textit{Step 2}.  The above inequality provides a starting point
 for applying the method of moving planes.  Now we move
plane $T_\la$ towards the right as long as the inequality  continue to hold. Our goal is to show that the plane $T_\la$
can be moved all the way to infinity. More precisely, let
$$\la_0=\sup\{\la\in \mathbb R\mid w_\mu\geq 0~\mbox{in}~\Sigma_\mu~\mbox{for every}~\mu<\la\},$$
and we will prove that 
$$
\la_0=+\infty.$$

Assume, for contradiction, that $\lambda_0 < +\infty$. Then, by the definition of $\lambda_0$, there exists a sequence $\{\lambda_k\} \searrow \lambda_0$ and a corresponding positive sequence $\{m_k\}$ such that
\begin{equation}\label{e12}
  \inf_{\Sigma_{\lambda_k}} w_{\lambda_k}(x) := -m_k < 0.
\end{equation}
We first claim that
\begin{equation}\label{ee13}
 m_k \rightarrow 0 \quad \text{as } k \rightarrow +\infty.
\end{equation}
Suppose, on the contrary, that there exists a subsequence (still denoted by $\{m_k\}$) such that $m_k > \bar C$ for some positive constant $\bar C$. Then there exists a sequence $\{y^k\} \subset \Sigma_{\lambda_k}$ such that
\begin{equation}\label{e31}
w_{\lambda_k}(y^k) \leq -\bar C < 0.
\end{equation}

\textit{Case (i).} If $y^k \in \Sigma_{\lambda_k} \setminus \Sigma_{\lambda_0}$, then since $\lambda_k \rightarrow \lambda_0$ as $k \rightarrow \infty$, we have
\[
|y^k - (y^k)^{\lambda_k}| = 2|\lambda_k - y^k_1| \rightarrow 0 \quad \text{as } k \rightarrow \infty.
\]
By the uniform continuity of $u$, it follows that
\[
w_{\lambda_k}(y^k) = u((y^k)^{\lambda_k}) - u(y^k) \rightarrow 0 \quad \text{as } k \rightarrow \infty.
\]

\textit{Case (ii).} If $y^k \in \Sigma_{\lambda_0}$, then using the uniform continuity of $u$, the fact that $\lambda_k \rightarrow \lambda_0$, and the definition of $\lambda_0$, we obtain
\[
\begin{aligned}
w_{\lambda_k}(y^k) &= u((y^k)^{\lambda_k}) - u((y^k)^{\lambda_0}) + w_{\lambda_0}(y^k) \\
&\geq u((y^k)^{\lambda_k}) - u((y^k)^{\lambda_0}) \rightarrow 0 \quad \text{as } k \rightarrow \infty.
\end{aligned}
\]
In both cases, we arrive at a contradiction with \eqref{e31}. Therefore, the claim \eqref{ee13} must hold.
\vspace{0.4cm}

Secondly, 
from \eqref{ee13}, there exists a sequence $\{x^k\}\subset \Sigma_{\la_k}$ such that
\begin{equation}\label{112z}
 w_{\lambda_k}(x^k)=-m_k+m_k^2<0.
\end{equation}
we will show that 
\begin{equation}\label{22e26}
d_k=\frac{1}{2}dist\{x^k,T_{\la_k}\}\geq \bar C>0 .
\end{equation}
Without loss of generality, we assume $d_k<1$, and hence $d_k<d_k^s$.

To this end, we   we construct a perturbation of  $w_{\la_k}$ near $x^k$ as follows
\begin{equation}\label{2z}
V_k(x)=w_{\la_k}(x)-m_k^2\varsigma_k(x),~x\in \mathbb R^n,
\end{equation}
where $$\varsigma_k(x)=\varsigma\left(\frac{x-x^k}{d^s_k}\right),$$ and $\varsigma\in C^{\infty}_0(\mathbb R^n)$ is a cut-off smooth function satisfying
 $$\left\{\begin{array}{ll}
0\leq \varsigma \leq 1,& \mbox{in}~\mathbb R^n,\\
 \varsigma=1,&  \mbox{in}~B_{\frac{1}{2}}(0),\\
 \varsigma=0, & \mbox{in}~\mathbb R^n \backslash B_1(0) .
\end{array}
\right.$$

Combining \eqref{e12}, \eqref{112z}, and \eqref{2z}, we obtain
\begin{equation*}
\begin{cases}
V_{ k}(x^k)= w_{\la_k} (x^k) -m_k^2\varsigma_k(x^k)=-m_k, &\\
V_{ k}(x )=w_{\la_k} (x ) -m_k^2\varsigma_k(x )\geq  -m_k, &\mbox{in }~ \mathbb R^n \setminus B_{d_k^s}(x^k),\\
 V_{ k}(x )=  -m_k^2\varsigma_k(x )\geq  -m_k^2, &\mbox{on}~ T_{\la_k}.
\end{cases}
\end{equation*}
This implies that each $V_k$ must attain its minimum, which is no greater than $-m_k$, at some point $\bar x^k \in \overline{B_{d^s_k}(x^k) \cap \Sigma_{\la_k}} \subset \Sigma_{\la_k}$ (see Figure 1), that is to say,
\begin{eqnarray}\label{z1}
\exists~ \{ \bar x^k \} \subset \overline{B_{d^s_k}(x^k)\cap \Sigma_{\la_k}} ~s.t. ~-m_k-m^2_k\leq V_k(\bar x^k)=\underset{\Sigma_{\la_k}  }{\inf}~ V_k(x )\leq -m_k,
\end{eqnarray}
 \begin{center}
\begin{tikzpicture}[scale=1.5][node distance = 5mm]
\draw[thin][blue] (1.76,0.75 ) circle (1);
\path (1.76,0.75) [very thick,fill=blue]  circle(1pt) node at (1.7,0.9) [ font=\fontsize{10}{10}\selectfont][blue] {{$ x^k  $}};
\path (1.26,0.25) [very thick,fill=red]  circle(1pt) node at (1.2,0.35) [ font=\fontsize{10}{10}\selectfont][red] {{$ \bar x^k  $}};
\path (1.2,2.1) node at (1.76,2 ) [ font=\fontsize{10}{10}\selectfont][blue] {$  B_{d_k^s}(x^k )$};
\draw [->,  semithick] (-0.7 ,-0.65) -- (3.3,-0.65) node[right] [ font=\fontsize{10}{10}\selectfont] {  {$x_1$}};
\draw [semithick] (2.65,-0.9) -- (2.65,2.3 ) node[above] [ font=\fontsize{8}{8}\selectfont] {{$ T_{\lambda_k}$}};
\path (0.46 , 0.3) node at (0.47, 1.8) [ font=\fontsize{10}{10}\selectfont] {$ \Sigma_{\lambda_k}$} ;
    \begin{scope}
     \clip (1.76,0.75 ) circle (1);
     \foreach \y in {-0.25,-0.15,...,1.75}
        \draw [dashed][gray][very thin](0.76,\y) -- (2.65,\y);
    \end{scope}
\node [below=1mm, align=flush center,text width=88mm] at (1.2,-1.2)
        { $Figure ~1$.  The domain $ B_{d_k^s}(x^k)\cap \Sigma_{\la_k}$  
         }  ;
\end{tikzpicture}
\end{center}
 which indicates that 
\begin{eqnarray}\label{ep1}
- m_k\leq  {w}_{\lambda_k}(\bar x^k )\leq -m_k+m^2_k<0.
\end{eqnarray}

From \eqref{z1} and the definitions of the operators $(-\Delta)^s$ and $-\Delta$, together with the anti-symmetry of $w_{\lambda_k}$ in $ \Sigma_{\lambda_k}$, we deduce
\begin{eqnarray}\label{e25}
-\lap V_k(\bar x^k ) \leq 0,
\end{eqnarray}
and
\begin{equation}\label{e26} \aligned
(-\lap)^s V_k(\bar x^k )=&C_{n,s}P.V.\int_{\mathbb R^n} \frac{V_k(\bar x^k )
-V_k(y)}{|\bar x^k-y|^{n+2s}}dy\\
=&C_{n,s}P.V.\int_{\Sigma_{\la_k}} \frac{V_k(\bar x^k )
-V_k(y )}{|\bar x^k-y|^{n+2s}}dy
 +C_{n,s}\int_{\Sigma_{\la_k}} \frac{V_k(\bar x^k )
-V_k(y^{\la_k} )}{|\bar x^k-y^{\la_k}|^{n+2s}}dy\\
\leq& C_{n,s}\int_{\Sigma_{\la_k}} \frac{2V_k(\bar x^k )-V_k(y )-V_k(y^{\la_k} )}{|\bar x^k-y^{\la_k}|^{n+2s}}dy\\
=& C_{n,s}\int_{\Sigma_{\la_k}} \frac{2V_k(\bar x^k )+m_k^2 \varsigma_k(y)-m_k^2  \varsigma_k(y^{\lambda_k})}{|\bar x^k-y^{\la_k}|^{n+2s}}dy\\
\leq & 2C_{n,s}(V_k(\bar x^k )+m^2_k)\int_{\Sigma_{\la_k}}\frac{1}{|\bar x^k-y^{\la_k}|^{n+2s}}dy\\
\leq & 2 C_{n,s}(V_k(\bar x^k )+m^2_k)\int_{\Sigma_{\la_k}}\frac{1}{|  x^k-y^{\la_k}|^{n+2s}}dy\\
\leq &\frac{c_1(-m_k+m^2_k)}{d_k^{2s}},
 \endaligned\end{equation}
 where   the second inequality from the bottom follows from the estimate
 $$ |\bar x^k-y^{\la_k}|\leq |\bar x^k-x^k|+|x^k-y^{\la_k}|\leq \frac{3}{2}|x^k-y^{\la_k}|.$$
 Combining \eqref{eq2w1}, \eqref{2z},  \eqref{e25} with \eqref{e26},  we obtain
\begin{equation}\label{finalde}
\begin{aligned}
 \frac{c_1(-m_k+m^2_k)}{d_k^{2s}}&\geq (-\lap)^s V_k(\bar x^k )-\lap V_k(\bar x^k )\\
 &=(-\lap)^s w_{\la_k}(\bar x^k )-\lap w_{\la_k}(\bar x^k )-m_k^2(-\lap)^s\varsigma_k(\bar x^k)+m_k^2 \lap \varsigma_k(\bar x^k)\\
 &\geq f(\bar x^k_1)M_{\la_k}(\bar x^k)w_{\la_k}(\bar x^k )-\frac{c_2 m_k^2 }{d_k^{2s^2}}- \frac{c_3 m_k^2 }{d_k^{2s}}\\
  &\geq f(\lambda_0) M_{\la_k}(\bar x^k)w_{\la_k}(\bar x^k )-\frac{c_2 m_k^2 }{d_k^{2s^2}}- \frac{c_3 m_k^2 }{d_k^{2s}},
  \end{aligned}
  \end{equation}
  where $\bar x^k_1$ is the first component of $\bar x^k $ and  we have used the fact that $$|(-\lap)^s \varsigma_k(\bar x^k)|\leq \frac{c_2   }{d_k^{2s^2}} \,\, \mbox{and}\,\, |\lap \varsigma_k(\bar x^k)|\leq  \frac{c_3  }{d_k^{2s}}$$
  and 
  $$
  f(\bar x_1^k) \leq   f(\lambda_k) \leq f(\lambda_0).
  $$
  Putting together \eqref{finalde} and \eqref{ep1}, we conclude that 
  \begin{equation}\label{23e26}
  c_1(1-m_k ) \leq |f(\lambda_0)|M_{\la_k}(\bar x^k) d_k^{2s} +c_2m_k d_k^{2s(1-s)} +c_3m_k.
  \end{equation}
 If \eqref{22e26} is not true, then $d_k\rightarrow 0$ as $k\rightarrow +\infty.$ Noting that $m_k\rightarrow 0$ as $k\rightarrow +\infty,$ therefore, if we let $k \to \infty$ in \eqref{23e26}, we  derive a contradiction. Hence, \eqref{22e26} holds.

 Suppose, on the contrary, that \eqref{22e26} fails. Then we must have $d_k \to 0$ as $k \to +\infty$. Observing that $m_k \to 0$ as well, we may let $k \to \infty$ in \eqref{23e26} to arrive at a contradiction. Consequently, \eqref{22e26} is verified.
\vspace{0.2cm}

  Next, we continue to show $\lambda_0=+\infty$. Accordingly, we  perturb $w_{\la_k}$ near $x^k$ again as follows
\begin{equation}\label{02z}
\tilde V_k(x)=w_{\la_k}(x)-m_k^2 \tilde \varsigma_k(x),~x\in \mathbb R^n,
\end{equation}
where $$\tilde \varsigma_k(x)=\tilde \varsigma\left(\frac{x-x^k}{d _k}\right)$$ with $d_k=\frac{1}{2}dist\{x^k,T_{\la_k}\}\geq \bar C>0$ and $\tilde \varsigma \in C^{\infty}_0(\mathbb R^n)$ is a cut-off smooth function satisfying
 $$\left\{\begin{array}{ll}
0\leq \tilde \varsigma \leq 1,& \mbox{in}~\mathbb R^n,\\
 \tilde \varsigma=1,&  \mbox{in}~B_{\frac{1}{2}}(0),\\
 \tilde \varsigma=0, & \mbox{in}~\mathbb R^n \backslash B_1(0) .
\end{array}
\right.$$
It then follows from \eqref{e12}, \eqref{112z} and \eqref{02z} that 
\begin{equation*}
\begin{cases}
\tilde V_k(x^k)= w_{\la_k} (x^k) -m_k^2\tilde\varsigma_k(x^k)=-m_k ,&\\
\tilde V_k(x)=w_{\la_k} (x ) -m_k^2\tilde\varsigma_k(x )\geq  -m_k ,&~\mbox{in}~ \mathbb R^n \setminus B_{d_k }(x^k),\\
\tilde V_k(x )=  -m_k^2\tilde\varsigma_k(x )\geq  -m_k ,&~\mbox{on}~ T_{\la_k}.
\end{cases}
\end{equation*}
This implies that $\tilde{V}_k$ must attain its minima, which is at most $-m_k$, within $\overline{B_{d_k}(x^k)} \subset \Sigma_{\lambda_k}$ (see Figure~2 below), more precisely, 
\begin{eqnarray}\label{0z1}
\exists~ \{ \tilde x^k \} \subset \overline{B_{d _k}(x^k) } ~s.t. ~-m_k-m^2_k\leq \tilde V_k(\tilde x^k)=\underset{\Sigma_{\la_k}  }{\inf}~ \tilde V_k(x )\leq -m_k.
\end{eqnarray}
 \begin{center}
\begin{tikzpicture}[scale=1.5][node distance = 5mm]
\draw[ blue] (1 ,0.75 ) circle (1);
\path (1 ,0.75) [very thick,fill=blue]  circle(1pt) node at (1.02 ,1.02) [blue]  {{$ x^k  $}};
\path (1.45 ,0.45) [very thick,fill=red]  circle(1pt) node at (1.51 ,0.68) [red] {{$\tilde x^k  $}};
\path (0.8 ,2.1) node at (1.26,2 ) [ font=\fontsize{8}{8}\selectfont] [blue]{$  B_{d_k }(x^k )$};
\draw [->,  semithick] (-0.7 ,-0.65) -- (3.3,-0.65) node[right] [ font=\fontsize{10}{10}\selectfont] {  {$x_1$}};
\draw [semithick] (2.65,-1.1) -- (2.65,2.4) node[above] [ font=\fontsize{10}{10}\selectfont] {{$ T_{\lambda_k}$}};
\path (0.26 , 0.3) node at (0.27, 2.2) [ font=\fontsize{10}{10}\selectfont] {$ \Sigma_{\lambda_k}$} ;

\node [below=2mm, align=flush center,text width=88mm] at (1.2,-1.2)
        { $Figure\, 2$.  The domain  $ B_{d_k }(x^k) $     \\ \footnotesize
           }  ;
\end{tikzpicture}
\end{center}
 Hence, we obtain
\begin{eqnarray}\label{00ep1}
- m_k\leq  {w}_{\lambda_k}(\tilde x^k )\leq -m_k+m^2_k<0.
\end{eqnarray}

From \eqref{0z1}, using the same calculations as in \eqref{e25} and \eqref{e26}, but with $V_k$ and $\bar{x}^k$ replaced by $\tilde{V}_k$ and $\tilde{x}^k$, respectively, we obtain
\begin{eqnarray}\label{0e25}
-\lap \tilde V_k(\tilde x^k ) \leq 0,
\end{eqnarray}
and
\begin{equation}\label{0e26}
(-\lap)^s \tilde V_k(\tilde x^k )
\leq  \frac{c_1(-m_k+m^2_k)}{d_k^{2s}},
\end{equation}
Therefore, combining \eqref{02z},  \eqref{0e25}, \eqref{0e26} with \eqref{22e26}, we obtain
\begin{equation}\label{eqtper}
\begin{aligned}
 (-\lap)^sw_{\la_k}(\tilde x^k )-\lap  w_{\la_k}(\tilde x^k )
&=[(-\lap)^s-\lap]\tilde V_k(\tilde x^k)+m^2_k[(-\lap)^s-\lap]\tilde\varsigma_k(\tilde x^k)\\
&\leq \frac{c_1(-m_k+m^2_k)}{d_k^{2s}}+\frac{c_2  m_k^2 }{d_k^{2s }}+\frac{c_3 m_k^2 }{d_k^{2}}\\
&\leq \frac{c (-m_k+m^2_k)}{d_k^{2s}},
\end{aligned}
\end{equation}
where we have used the fact that $$|(-\lap)^s \tilde\varsigma_k(\tilde x^k)|\leq \frac{c_2   }{d_k^{2s }}\,\, \mbox{ and } \,\, |\lap \tilde\varsigma_k(\tilde x^k)|\leq  \frac{c_3  }{d_k^{2 }}.$$
Combining \eqref{eqtper} with \eqref{eq2w1} yields
\begin{equation}\label{ep2}
\frac{c(-m_k+m^2_k)}{d_k^{2s}}\geq f( \tilde x^k_1) M_{\la_k} (\tilde x^k ) w_{\la_k}(\tilde x^k ).
\end{equation}
If $f(\tilde{x}_1^k) \leq 0$, this leads to a contradiction immediately, and the proof is complete. Hence, we may assume that $f(\tilde{x}_1^k) > 0$. By \eqref{00ep1}, we arrive at
\begin{eqnarray}\label{ep3}
  c(1-m_k ) \leq f(\tilde x_1^k)M_{\lambda_k} (\tilde x^k)d_k^{2s}\leq M_{\lambda_k} (\tilde x^k) f(\tilde x_1^k)|\tilde x_1^k|^{2s}  .
\end{eqnarray}
 Therefore,   by assumption \eqref{p100} and the boundedness of $M_{\lambda_k} (\tilde x^k ) $, we obtain
 $$\{\tilde x^k_1\}~\mbox{is bounded away from}~-\infty$$
 which together with $f\in C(\mathbb R^n)$ and $ \tilde x^k\in  \overline{B_{d _k}(x^k) }$  with $d_k=\frac{1}{2} dist\{\tilde x^k,T_{\la_k}\}$ gives rise to
 $$f(\tilde x_1^k)\leq c,~ d_k^{2s}\leq c,$$
for sufficiently large $k$.

On the other hand,  by \eqref{ep3}, we conclude that
\begin{eqnarray}\label{ep6a}
f(\tilde x_1^k),~ M_{\lambda_k}(\tilde x^k )\geq C>0,
\end{eqnarray}
for sufficiently large $k$. Then by the definition of $M_{\lambda_k}$, the assumption \eqref{www} on $g$ and the positivity of $u$, we obtain 
\begin{eqnarray}\label{ep6}
 u(\tilde x^k )\geq C>0~ \mbox{and}~g(u(\tilde x^k ))\geq C>0.
\end{eqnarray}
Since $w_{\la_k}(\tilde x^k)\rightarrow 0$ as $k\rightarrow\infty$, by \eqref{ep6}, we have
\begin{eqnarray}\label{ep7}
 g( u_{\la_k}(\tilde x^k ))\geq C>0,
\end{eqnarray}
for sufficiently large $k$.

Next, starting from the original equation \eqref{eq2w1}, we revise \eqref{ep2} as follows:
\begin{equation}\label{ep8}
   \frac{c(-m_k + m_k^2)}{d_k^{2s}} \geq \left( f((\tilde{x}_1^k)^{\lambda_k}) - f(\tilde{x}_1^k) \right) g(u_{\lambda_k}(\tilde{x}^k, \tilde{t}_k)) + f(\tilde{x}_1^k) M_{\lambda_k}(\tilde{x}^k) w_{\lambda_k}(\tilde{x}^k).
\end{equation}
Noting that $|{\tilde{x}_1^k} - \lambda_k| \sim d_k$, and using the continuity and monotonicity of $f$ along with \eqref{ep6a}, we obtain
\begin{eqnarray}\label{ep9}
  f((\tilde x_1^k)^{\la_k})-f (\tilde x_1^k) \geq C>0.
\end{eqnarray}
Combining \eqref{ep1}, \eqref{ep6a}, \eqref{ep7}, \eqref{ep8} and \eqref{ep9},  we arrive at a contradiction for sufficiently large $k$.

Therefore,  $\la_0=+\infty$.

\textit{Step 3}. Based on the already established result
\[
w_\lambda(x) \geq 0, \quad \forall\, x \in \Sigma_\lambda,\ \forall\, \lambda \in \mathbb{R},
\]
we will now further prove that
\begin{equation}\label{ep9ww}
w_\lambda(x) > 0, \quad \forall\, x \in \Sigma_\lambda,\ \forall\, \lambda \in \mathbb{R}.
\end{equation}

Otherwise, for some fixed $\lambda$, there exists $\bar{x} \in \Sigma_\lambda$ such that
\[
w_\lambda(\bar{x}) = \min_{x \in \Sigma_\lambda} w_\lambda(x) = 0.
\]
At this minimum point, we have
\[
(-\Delta)^s w_\lambda(\bar{x}) - \Delta w_\lambda(\bar{x}) \leq 0.
\]
On the other hand, from equation \eqref{eq2w1}, we derive
\begin{equation*}
\begin{aligned}
(-\Delta)^s w_\lambda(\bar{x}) - \Delta w_\lambda(\bar{x}) 
&= \big(f(\bar{x}_1^\lambda) - f(\bar{x}_1)\big) g(u_\lambda(\bar{x})) 
+ f(\bar{x}_1) \big(g(u_\lambda(\bar{x})) - g(u(\bar{x}))\big) \\
&= \big(f(\bar{x}_1^\lambda) - f(\bar{x}_1)\big) g(u_\lambda(\bar{x})) \\
&> 0,
\end{aligned}
\end{equation*}
where the last inequality follows from the positivity of $u$ and $g$, and the strict monotonicity of $f$ with respect to $x_1$. This leads to a contradiction. Hence, \eqref{ep9ww} is established.

We have thus shown that every positive solution must be strictly increasing in the $x_1$ direction. This completes the proof of Theorem~\ref{mthm12}.
\end{proof}

Based on the monotonicity results, we will prove the nonexistence of positive bounded solutions for the mixed local and  nonlocal elliptic equation  by a simple principle eigenvalue argument.

\begin{proof}[Proof of Theorem \ref{thmf10}]
Since condition \eqref{e13} holds and $f(x_1)$ is strictly increasing in $x_1$, we may assume that there exists a sufficiently large constant $R$ such that $f(x_1) > 0$ for all $x_1 \in (R - 2, \infty)$. Let $B_1(x_0)$ denote the unit ball centered at $x_0 = (R, 0, \ldots, 0)$, and let $\lambda_1$ be the first (nonzero) eigenvalue of the problem
\begin{equation*}
\left\{
\begin{array}{ll}
(-\Delta)^s \varphi(x) - \Delta \varphi(x) = \lambda_1 \varphi(x), & x \in B_1(x_0), \\
\varphi(x) > 0, & x \in B_1(x_0), \\
\varphi(x) = 0, & x \in B_1^c(x_0).
\end{array}
\right.
\end{equation*}
According to \cite{SVWZ}, the corresponding eigenfunction $\varphi(x)$ satisfies $\varphi \in C^{2,\kappa}_{\mathrm{loc}}(B_1(x_0)) \cap C^{1,\kappa}(\overline{B_1(x_0)})$ for any $\kappa \in (0,1)$.

Let $\xi_u = \min_{B_1(0)} u$. Since $u$ is positive, we have $\xi_u > 0$ and
\[
m_0 := \frac{g(\xi_u)}{\sup_{\mathbb{R}^n} u} > 0.
\]
Using the fact that $u(x)$ is monotone increasing in the $x_1$ direction, it follows that
\[
(-\Delta)^s u(x) - \Delta u(x) \geq f(R-1)\, m_0\, u(x), \quad x \in B_1(x_0).
\]
By taking $R$ sufficiently large, and noting that $f(x_1) \to +\infty$ as $x_1 \to +\infty$, we can ensure that
\[
(-\Delta)^s u(x) - \Delta u(x) \geq \lambda_1 u(x), \quad x \in B_1(x_0).
\]
Define 
\[
A_1 = \max_{x \in B_1(x_0)} \frac{\varphi(x)}{u(x)}, \quad \text{and} \quad D_1(x) = A_1 u(x).
\]
Then \( D_1(x) \) satisfies
\begin{equation*}
\left\{
\begin{array}{ll}
(-\Delta)^s (D_1(x) - \varphi(x)) - \Delta (D_1(x) - \varphi(x)) \geq 0, & x \in B_1(x_0), \\
D_1(x) - \varphi(x) > 0, & x \in \mathbb{R}^n \backslash B_1(x_0).
\end{array}
\right.
\end{equation*}
Applying the maximum principle for mixed local and nonlocal equations (Lemma~\ref{lemf4}), we obtain
\begin{equation}\label{eq:f53}
D_1(x) > \varphi(x), \quad x \in B_1(x_0).
\end{equation}

On the other hand, by the definition of $A_1$, there exists a point $\bar{x} \in B_1(x_0)$ such that
\[
D_1(\bar{x}) = \frac{\varphi(\bar{x})}{u(\bar{x})} u(\bar{x}) = \varphi(\bar{x}),
\]
which contradicts \eqref{eq:f53}.

Therefore, equation \eqref{v1} admits no bounded positive solution. This completes the proof of Theorem~\ref{thmf10}.
\end{proof}

\section{Liouville theorem for  mixed local and nonlocal indefinite parabolic equations}

In this section, we present the proofs of Theorems~\ref{mthm1} and~\ref{mthm2}. For Theorem~\ref{mthm1}, we focus on the distinct computational aspects associated with the operator $\partial_t^\alpha$ in the cases $\alpha = 1$ and $0 < \alpha < 1$. In the proof of Theorem~\ref{mthm2}, we treat the cases $\alpha = 1$ and $0 < \alpha < 1$ separately.

 Let $u(x, t)$ be a positive solution of equation \eqref{eq1}. To compare the values of $u(x,t)$ and its reflection $u_\lambda(x, t) := u(x^\lambda, t)$, we define
\[
w_\lambda(x, t) := u(x^\lambda, t) - u(x, t).
\]
Clearly, $w_\lambda(x,t)$ is antisymmetric in $x$ with respect to the hyperplane $T_\lambda$.

Then, for $(x, t) \in \mathbb{R}^n \times \mathbb{R}$, due to the strict monotonicity of $f$ with respect to $x_1$, we have
\begin{eqnarray}\label{eq3}
&&  {\partial^\alpha_ t}w_\lambda(x,t) +(-\Delta)^s w_\lambda(x,t)-\lap w_\la(x,t)\nonumber\\
&=&f( x_1^\lambda )g(u_\lambda(x, t))-f(x_1)g(u(x, t))\nonumber\\
&=& (f(x_1^\lambda)-f(x_1))g(u_\lambda(x, t))+f(x_1)(g(u_\lambda(x, t))-g(u(x,t)))\nonumber\\
&\geq & f(x_1)M_\lambda(x,t) w_\lambda(x,t),
\end{eqnarray}
where 
$$M_\lambda(x, t)=\frac{g(u_\lambda(x,t))-f(u(x,t))}{u_\lambda(x,t)-u(x,t)}$$
is bounded and nonnegative.
Now we begin with the proof of Theorem \ref{mthm1}.
\vspace{0.2cm}

\begin{proof} [Proof of Theorem \ref{mthm1}.]
We will carry out the proof in three steps.

\emph{Step 1.} In the first step, we show that for $\lambda$ sufficiently negative, the following holds:
\begin{equation}\label{eq-1}
w_\lambda(x, t) \geq 0, \quad (x, t) \in \Sigma_\lambda \times \mathbb{R}.
\end{equation}

We proceed by contradiction and use a perturbation argument. Suppose \eqref{eq-1} does not hold. Since $u$ is bounded, there exists some $\lambda < -R$ for a constant $R > 0$ to be determined later, and a constant $B > 0$ such that
\begin{equation}\label{eq-2}
\inf_{(x, t) \in \Sigma_\lambda \times \mathbb{R}} w_\lambda(x, t) = -B < 0.
\end{equation}

If the minimum of $w_\lambda$ is attained at some point, then a contradiction can be derived directly at that point. However, since both $x$ and $t$ vary over unbounded domains, the minimum may not be achieved, and this case requires a more delicate treatment.

From \eqref{eq-2}, we deduce that there exists a sequence of approximate minima $\{(x^k, t_k)\} \subset \Sigma_\lambda \times \mathbb{R}$ and a sequence $\{\delta_k\} \searrow 0$ such that
\begin{equation}\label{eq-3}
w_\lambda(x^k, t_k) = -B + \delta_k < 0.
\end{equation}

To construct a sequence of functions that attain their minima, we perturb $w_\lambda$ near $(x^k, t_k)$ as follows:
\begin{eqnarray}\label{eq-4}
W_k(x , t ) =w_\la(x,t)-\delta_k\xi_k(x,t),~(x,t)\in \mathbb R^n\times \mathbb R,
\end{eqnarray}
where
\[
\xi_k(x,t) = \xi\left( \frac{x - x^k}{d_k}, \frac{t - t_k}{d_k^{\frac{2s}{\alpha}}} \right),
\]
with \( d_k = \frac{1}{2} \operatorname{dist}(x^k, T_\lambda) > 0 \), and \( \xi \in C_0^\infty(\mathbb{R}^n \times \mathbb{R}) \) is a smooth cutoff function satisfying
\[
\left\{
\begin{array}{ll}
0 \leq \xi \leq 1, & \text{in } \mathbb{R}^n \times \mathbb{R},\\[5pt]
\xi = 1, & \text{in } B_{\frac{1}{2}}(0) \times \left[-\tfrac{1}{2}, \tfrac{1}{2} \right],\\[5pt]
\xi = 0, & \text{in} \left(\mathbb{R}^n \times \mathbb{R}\right)\backslash \left(B_1(0) \times [-1, 1]\right).
\end{array}
\right.
\]

We now claim that the sequence \( \{x^k\} \) does not approach the hyperplane \( T_\lambda \), i.e., there exists a constant \( \tilde{c} > 0 \) such that
\begin{equation}\label{bbz}
d_k = \tfrac{1}{2} \operatorname{dist}(x^k, T_\lambda) \geq \tilde{c}, \quad \text{for all } k \in \mathbb{N}.
\end{equation}
If this is not true, then \( x^k \to T_\lambda \) and  for sufficiently large \( k \), we have
\[
W_k(x^k, t_k) = w_\lambda(x^k, t_k) - \delta_k \xi_k(x^k, t_k) = w_\lambda(x^k, t_k) - \delta_k \to 0.
\]
On the other hand, from \eqref{eq-3}, we know
\[
W_k(x^k, t_k) = w_\lambda(x^k, t_k) - \delta_k \xi_k(x^k, t_k) = -B < 0,
\]
which yields a contradiction, and hence \eqref{bbz} holds.

We now define the parabolic cylinder centered at \( (x^k, t_k) \) as
\[
E(x^k, t_k) := B_{d_k}(x^k) \times \left[ t_k - d_k^{\frac{2s}{\alpha}},\, t_k + d_k^{\frac{2s}{\alpha}} \right] \subset \Sigma_\lambda \times \mathbb{R},
\]
see Figure 3 below.

 \begin{center}
\begin{tikzpicture}[scale=1.8][node distance = 5mm]
\draw (-1,1.5) -- (-1,0) arc (180:360:1cm and 0.5cm) -- (1,1.5) ++ (-1,0) circle (1cm and 0.5cm);
\draw[densely dashed] (-1,0) arc (180:0:1cm and 0.5cm);
\path (1,1.5) [very thick,fill=black]  circle(1pt) node at (1.78,1.5) [ font=\fontsize{10}{10}\selectfont] {{$t_k+(d_k)^{\frac{2s}{\alpha}}$}};
\path (1,0) [very thick,fill=black]  circle(1pt) node at (1.76,-0.07) [ font=\fontsize{10}{10}\selectfont] {{$t_k-(d_k)^{\frac{2s}{\alpha}}$}};
\path (0,0.75) [very thick,fill=blue]  circle(1pt) node at (0.5,0.85) [ font=\fontsize{8}{8}\selectfont] [blue]{{$(x^k,t_k)$}};
\path (-0.6,0.9) [very thick,fill=red]  circle(1pt) node at (-0.6,0.7) [ font=\fontsize{8}{8}\selectfont] [red]{{$(\tilde x^k,\tilde t_k)$}};
\path (0,2.1) node at (0,2.2) [ font=\fontsize{8}{8}\selectfont] {$  E(x^k,t_k)$};
\draw [->,  semithick] (-2.2,-0.65) -- (3,-0.65) node[right] [ font=\fontsize{10}{10}\selectfont] {  {$x_1$}};
\draw [semithick] (2.75,-0.5,0.9) -- (2.95,0.6,-0.6  ) node[above] [ font=\fontsize{10}{10}\selectfont] {{$ T_\lambda$}};
\path (2.8 , 0.3) node at (2.6, 0.55) [ font=\fontsize{8}{8}\selectfont] {$ \Sigma_\lambda$} ;
\node [below=2mm, align=flush center,text width=88mm] at (0,-1.3)
        { $Figure ~ 3$.  The domain  $E(x^k,t_k)$  };
\end{tikzpicture}
\end{center}
Here, for the case of the mixed local and nonlocal parabolic equation with \(\alpha = 1\), the parameter \(\alpha\) in the expressions for \(\xi_k(x, t)\) and \(E(x^k, t_k)\) should also be taken as 1.

It follows from \eqref{eq-2}, \eqref{eq-3}, and \eqref{eq-4} that 
\[\left\{
\begin{array}{ll}
W_k(x^k, t_k) = -B, & \\
W_k(x, t) = -\delta_k \xi_k(x, t) > -B & \mbox{on}\,\, T_\lambda \times \mathbb{R},\\
W_k(x, t) = w_\lambda(x, t) \geq -B & \mbox{in}\,\, (\Sigma_\lambda \times \mathbb{R}) \setminus E(x^k, t_k).
\end{array}
\right.
\]
This implies that \(W_k\) must attain its minimum, which is at most \(-B\), at some point \((\tilde{x}^k, \tilde{t}_k) \in \overline{E(x^k, t_k)} \subset \Sigma_\lambda \times \mathbb{R}\), that is,
\begin{eqnarray}\label{23}
\exists~ \{(\tilde x^k,\tilde t_k)\} \subset \overline{E(x^k,t_k)}~s.t. ~-B-\delta_k\leq W_k(\tilde x^k,\tilde t_k)=\underset{\Sigma_\la \times \mathbb R}{\inf} W_k(x,t)\leq -B.
\end{eqnarray}
As a consequence of \eqref{eq-2} and \eqref{eq-4}, we have
\begin{eqnarray}\label{24}
\begin{cases}
-B\leq w_\la(\tilde x^k,\tilde t_k)\leq -B+\delta_k<0,&\\
\partial _tW_k(\tilde x^k,\tilde t_k)\leq 0, &\\
\partial^\alpha_tW_k(\tilde x^k,\tilde t_k)=C_\alpha \int_{-\infty}^{\tilde t_k} \frac{W_k(\tilde x^k,\tilde t_k)-W_k(\tilde x^k,\tau)}{(\tilde t_k-\tau)^{1+\alpha}}d\tau \leq 0, &\mbox{if}~0<\alpha<1.
\end{cases}\end{eqnarray}
From \eqref{23} and the definition of operator $(-\lap)^s$ and $-\lap$, and using the antisymmetry of $W_\la$ with respect to  $x$, we deduce
\begin{eqnarray}\label{25}
-\lap W_k(\tilde x^k,\tilde t_k) \leq 0,
\end{eqnarray}
and
\begin{equation}\label{26} \aligned
(-\lap)^s W_k(\tilde x^k,\tilde t_k)=&C_{n,s}P.V.\int_{\mathbb R^n} \frac{W_k(\tilde x^k,\tilde t_k)
-W_k(y,\tilde t_k)}{|\tilde x^k-y|^{n+2s}}dy\\
=&C_{n,s}P.V.\int_{\Sigma_\la} \frac{W_k(\tilde x^k,\tilde t_k)
-W_k(y,\tilde t_k)}{|\tilde x^k-y|^{n+2s}}dy\\
&+C_{n,s}\int_{\Sigma_\la} \frac{W_k(\tilde x^k,\tilde t_k)
-W_k(y^\la,\tilde t_k)}{|\tilde x^k-y^\la|^{n+2s}}dy\\
\leq& C_{n,s}\int_{\Sigma_\la} \frac{2W_k(\tilde x^k,\tilde t_k)-W_k(y,\tilde t_k)-W_k(y^\la,\tilde t_k)}{|\tilde x^k-y^\la|^{n+2s}}dy\\
=& C_{n,s}\int_{\Sigma_{\la}} \frac{2W_k(\tilde x^k,\tilde t_k)+\delta_k \xi_k(y)-\delta_k  \xi_k(y^{\lambda})}{|\tilde x^k-y^{\la}|^{n+2s}}dy\\
\leq & 2 C_{n,s}(W_k(\tilde x^k,\tilde t_k)+\delta_k)\int_{\Sigma_\la}\frac{1}{|\tilde x^k-y^\la|^{n+2s}}dy\\
\leq & 2 C_{n,s}(W_k(\tilde x^k,\tilde t_k)+\delta_k)\int_{\Sigma_\la}\frac{1}{|  x^k-y^\la|^{n+2s}}dy\\
\leq &\frac{c_1(-B+\delta_k)}{d_k^{2s}},
 \endaligned\end{equation}
 where the second  inequality from the bottom follows from the fact
 $$ |\tilde x^k-y^\la|\leq |\tilde x^k-x^k|+|x^k-y^\la|\leq \frac{3}{2}|  x^k-y^\la|.$$
Therefore, combining \eqref{eq-4}, \eqref{24}, \eqref{25}, \eqref{26} and \eqref{bbz},  we obtain, for sufficiently small $\delta_k$,
\begin{equation}\label{eqoes}
\begin{aligned}
&(\partial^\alpha_t+(-\lap)^s-\lap) w_\la(\tilde x^k,\tilde t_k)\\
=&(\partial^\alpha_t+(-\lap)^s-\lap) W_k(\tilde x^k,\tilde t_k)+\delta_k(\partial^\alpha_t+(-\lap)^s-\lap)\xi_k(\tilde x^k,\tilde t_k)\\
\leq & \frac{c_1(-B+\delta_k)}{d_k^{2s}}+\frac{c_2\delta_k}{d_k^{2s}}+\frac{c_3\delta_k}{d_k^2}\\
\leq & \frac{c (-B+\delta_k)}{d_k^{2s}},
\end{aligned}
\end{equation}
where we have used the fact that 
$$ \left|(\partial_t^\alpha+(-\lap)^s) \xi_k(\tilde x^k,\tilde t_k)\right|\leq \frac{c_2}{d_k^{2s}}\,\, \mbox{and}\,\, \left|\lap \xi_k(\tilde x^k,\tilde t_k)\right|\leq \frac{c_3}{d_k^2}.$$
Combining \eqref{eqoes} with \eqref{eq3}, one has
\begin{equation}\label{27}
\frac{c(-B+\delta_k)}{d_k^{2s}}\geq f( \tilde x^k_1) M_\lambda(\tilde x^k, \tilde t_k) w_\lambda(\tilde x^k,\tilde t_k).
\end{equation}
If $f( \tilde x^k_1)\leq0,$ this directly implies a contradiction and thus we complete the proof. If
$f( \tilde x^k_1)>0,$ by \eqref{eq-2}, \eqref{27} and $M_\lambda(\tilde x^k, \tilde t_k)$
 is bounded and nonnegative,  we arrive at
 $$\frac{c(-B+\delta_k)}{-B}\leq c_0f( \tilde x^k_1)d_k^{2s}\leq c_0f( \tilde x^k_1)|\tilde x^k_1|^{2s}.$$
 Hence for sufficiently large $k$, we obtain
$$ f( \tilde x^k_1)|\tilde x^k_1|^{2s}\geq \frac{c}{2c_0},$$
which is impossible, since $\lambda$ is sufficiently negative, $x^k_1 < \lambda$, and 
 $$\underset{x_1\rightarrow-\infty}{\limsup}f(x_1)|x_1|^{2s}\leq0.$$
Therefore, \eqref{eq-1} holds.

\textit{Step 2.}
We move the plane $T_\lambda$ to the right along the $x_1$-direction as long as the inequality \eqref{eq-1} holds, until it reaches its limiting position $T_{\lambda_0}$, where $\lambda_0$ is defined by
\[
\lambda_0 := \sup \left\{ \lambda \,\middle|\, w_\mu \geq 0 \ \text{in} \ \Sigma_\mu \times \mathbb{R} \ \text{for all} \ \mu \leq \lambda \right\}.
\]
We will prove
\begin{equation}\label{28}
\la_0=+\infty.
\end{equation}
If \eqref{28} does not hold, then $\lambda_0 < +\infty$. By the definition of $\lambda_0$, there exists a sequence $\{\lambda_k\} \searrow \lambda_0$ and a positive sequence $\{\gamma_k\}$ such that
\begin{equation*}
\underset{(x,t)\in \Sigma_{\la_k}\times \mathbb R}{\inf} w_{\la_k}(x,t):=-\gamma_k<0.
\end{equation*}

We first prove that 
\begin{equation}\label{30}
\gamma_k\rightarrow0 ~\mbox{as}~k\rightarrow \infty.
\end{equation}
Otherwise, there exists a subsequence, still denoted by $\{\gamma_k\}$, such that $\gamma_k<-m$ for some positive constant $m$. Hence, there exists a sequence $\{(y^k,s_k)\}\subset \Sigma_{\la_k}\times \mathbb R$ such that
\begin{equation}\label{31}
w_{\la_k}(y^k,s_k)\leq-m<0  .
\end{equation}

\textit{Case (i).} When $y^k \in \Sigma_{\lambda_k} \setminus \Sigma_{\lambda_0}$, together with the fact that $\lambda_k \to \lambda_0$ as $k \to \infty$, we obtain
\[
|y^k - (y^k)^{\lambda_k}| = 2|\lambda_k - y^k_1| \to 0 \quad \text{as} \quad k \to \infty.
\]
Then, by the uniform continuity of $u$, it follows that
\[
w_{\lambda_k}(y^k, s_k) = u((y^k)^{\lambda_k}, s_k) - u(y^k, s_k) \to 0 \quad \text{as} \quad k \to \infty.
\]

\textit{Case (ii).} When $y^k \in \Sigma_{\lambda_0}$, by the uniform continuity of $u$, the convergence $\lambda_k \to \lambda_0$, and the definition of $\lambda_0$, we deduce that
$$ \aligned w_{\la_k}(y^k,s_k)&= u((y^k)^{\la_k},s_k)-u((y^k)^{\la_0},s_k)+w_{\la_0}(y^k,s_k)\\
&\geq u((y^k)^{\la_k},s_k)-u((y^k)^{\la_0},s_k)\rightarrow  0,~\mbox{as}~ k\rightarrow \infty.
\endaligned$$
The conclusions from the above two cases contradict \eqref{31}. Therefore, \eqref{30} holds.

Secondly, from \eqref{30}, there exists a sequence $\{(x^k,t_k)\}\subset \Sigma_{\la_k}\times \mathbb R$ such that
$$
 w_{\lambda_k}(x^k, t_k)=-\gamma_k+\gamma_k^2<0.
$$
We will prove that  $x^k$ will not converge to $T_{\la_k}$,
namely,
\begin{equation}\label{e2622}
d_k=\frac{1}{2}dist\{x^k,T_{\la_k}\}\geq \hat C>0 .
\end{equation}
Otherwise, we have $d_k \to 0$. Then we construct a perturbation of $w_{\lambda_k}$ near the point $(x^k, t_k)$ as follows
$$\hat V_k(x,t)=w_{\la_k}(x,t)-\gamma_k^2\hat \xi_k(x,t),~(x,t)\in \mathbb R^n\times \mathbb R,$$
where $$\hat \xi_k =\hat \xi\left(\frac{x-x^k}{d^s_k},\frac{t-t_k}{{d_k}^{\frac{2s}{\alpha}}}\right)$$
with $d_k=\frac{1}{2}dist\{x^k,T_{\la_k}\} $  
and $\hat\xi\in C^{\infty}_0(\mathbb R^n\times \mathbb R)$ is a smooth cut off  functions satisfying
\begin{equation*}
\begin{cases}
 0\leq \hat\xi \leq 1, & \mbox{in}~ \mathbb R^n \times \mathbb R,\\
 \hat\xi=1, &  \mbox{in}~B_{\frac{1}{2}}(0)\times [-\frac{1}{2},\frac{1}{2}],\\
\hat\xi=0, & \mbox{in}~ (\mathbb R^n \times \mathbb R)\backslash (B_{1}(0)\times [-1,1]).
\end{cases}
\end{equation*}

Set
$$\hat E_k(x^k,t_k):=(B_{d^s_k}(x^k)\cap\Sigma_{\la_k}) \times [t_k-(d_k)^{\frac{2s}{\alpha}}, t_k+d_k^{\frac{2s}{\alpha}}]\subset \Sigma_{\la_k}\times \mathbb R.$$
As a result,  $\hat V_k$ must achieve its minimum which is at most $-\gamma_k$ in $\overline{\hat E_k(x^k,t_k)}\subset \Sigma_{\la_k}\times \mathbb R $ (see Figure 4),  that is
\begin{eqnarray*}
\exists~ \{(\hat x^k,\hat t_k)\} \subset \overline{\hat E_k(x^k,t_k)}~s.t. ~-\gamma_k-\gamma^2_k\leq \hat V_k(\hat x^k,\hat t_k)=\underset{\Sigma_{\la_k} \times \mathbb R}{\inf}\hat V_k(x,t)\leq -\gamma_k.
\end{eqnarray*}
 \begin{center}
\begin{tikzpicture}[scale=2][node distance = 5mm]
\draw (-1,1.5) -- (-1,0) arc (180:360:1cm and 0.5cm) -- (1,1.5) ++ (-1,0) circle (1cm and 0.5cm);
\draw[densely dashed] (-1,0) arc (180:0:1cm and 0.5cm);
\path (1,1.5) [very thick,fill=black]  circle(1pt) node at (1.6,1.7) [ font=\fontsize{10}{10}\selectfont] {{$t_k+(d_k)^{\frac{2s}{\alpha}}$}};
\path (1,0) [very thick,fill=black]  circle(1pt) node at (1.6, 0) [ font=\fontsize{10}{10}\selectfont] {{$t_k-(d_k)^{\frac{2s}{\alpha}}$}};
\path (0,0.75) [very thick,fill=blue]  circle(1pt) node at (0.4,0.85) [ font=\fontsize{8}{8}\selectfont][blue] {{$(x^k,t_k)$}};
\path (-0.7,0.5) [very thick,fill=red]  circle(1pt) node at (-0.7,0.7) [ font=\fontsize{8}{8}\selectfont][red] {{$(\hat x^k, \hat t_k)$}};
\path (0,2.1) node at (0,2.2) [ font=\fontsize{10}{10}\selectfont] {$ \hat E_k(x^k,t_k)$};
\draw [->,  semithick] (-2,-0.65) -- (2.5,-0.65) node[right] [ font=\fontsize{10}{10}\selectfont] {  {$x_1$}};
\draw [semithick] (0.75,-0.5,0.9) -- (0.95,0.6,-0.6 ) node[above] [ font=\fontsize{10}{10}\selectfont] {{$ T_{\lambda_k}$}};

\path (-1.2, 0.3) node at (-1.5, 0.5) [ font=\fontsize{10}{10}\selectfont] {$ \Sigma_{\lambda_k}$} ;
\node [below=2mm, align=flush center,text width=88mm] at (0,-1.2)
        { $Figure ~4$.  The domain $\hat E_k(x^k,t_k)$  };
\end{tikzpicture}
\end{center}
 It follows that 
\begin{eqnarray*}
-\gamma_k\leq  {w}_{\lambda_k}(\hat x^k, \hat t_k)\leq -\gamma_k+\gamma^2_k<0.
\end{eqnarray*}
Employing arguments parallel to those in \eqref{24}, \eqref{25}, and \eqref{26}, we conclude
$$\partial_t^\alpha \hat V_k(\hat x^k,\hat t_k)+ (-\lap)^s\hat V_k(\hat x^k,\hat t_k)-\lap\hat V_k(\hat x^k,\hat t_k)\leq \frac{\hat c(-\gamma_k+\gamma_k^2)}{d_k^{2s}},$$
which together with \eqref{eq3} gives rise to
$$\aligned\frac{\hat c(-\gamma_k+\gamma_k^2)}{d_k^{2s}}&\geq \partial_t^\alpha\hat V_k(\hat x^k,\hat t_k)+ (-\lap)^s\hat V_k(\hat x^k,\hat t_k)-\lap\hat V_k(\hat x^k,\hat t_k)\\
&=\left(\partial_t^\alpha + (-\lap)^s -\lap \right) w_{\la_k}(\hat x^k,\hat t_k)-\gamma_k^2\left(\partial_t^\alpha + (-\lap)^s -\lap \right) \hat \xi _k(\hat x^k,\hat t_k)\\
&\geq f(\hat x_1^k)M_{\la_k}(\hat x^k,\hat t_k)w_{\la_k}(\hat x^k,\hat t_k) -\frac{c_2\gamma_k^2}{d_k^{2s^2}}-\frac{c_3\gamma_k^2}{d_k^{2s}},
\endaligned $$
therefore,
\begin{equation*}
\begin{aligned}
\hat c(1-\gamma_k^2) \leq & f(\hat x_1^k)M_{\la_k}(\hat x^k,\hat t_k)d_k^{2s}+c_2\gamma_kd_k^{2s(1-s)}+c_3\gamma_k\\
\leq &  f(\lambda_0)M_{\la_k}(\hat x^k,\hat t_k)d_k^{2s}+c_2\gamma_kd_k^{2s(1-s)}+c_3\gamma_k.
\end{aligned}
\end{equation*}
This leads to a contradiction when $d_k \to 0$ as $k \to +\infty$. Therefore, \eqref{e2622} holds.

Next, we continue the proof of $\lambda_0 = +\infty$. To this end, we perturb $w_{\lambda_k}$ once again near the point $(x^k, t_k)$ as follows:
\[
V_k(x, t) = w_{\lambda_k}(x, t) - \gamma_k^2 \, \xi_k(x, t), \quad (x, t) \in \mathbb{R}^n \times \mathbb{R},
\]
where $\xi_k$ is defined as in Step 1, with 
\[
d_k = \frac{1}{2} \operatorname{dist}(x^k, T_{\lambda_k}) \geq \hat{C} > 0.
\]

Set
$$E_k(x^k,t_k):= B_{d _k}(x^k)  \times [t_k-(d_k)^{\frac{2s}{\alpha}}, t_k+d_k^{\frac{2s}{\alpha}}]\subset \Sigma_{\la_k}\times \mathbb R.$$
Then  $V_k$ must achieve its minimum which is at most $-\gamma_k$ within $\overline{E_k(x^k,t_k)}\subset \Sigma_{\la_k}\times \mathbb R $,  $i.e.,$
\begin{eqnarray*}
\exists~ \{(\tilde x^k,\tilde t_k)\} \subset \overline{E_k(x^k,t_k)}~s.t. ~-\gamma_k-\gamma^2_k\leq V_k(\tilde x^k,\tilde t_k)=\underset{\Sigma_{\la_k} \times \mathbb R}{\inf} V_k(x,t)\leq -\gamma_k,
\end{eqnarray*}
 which implies that 
\begin{eqnarray}\label{p1}
-\gamma_k\leq  {w}_{\lambda_k}(\tilde x^k, \tilde t_k)\leq -\gamma_k+\gamma^2_k<0.
\end{eqnarray}
Then, by applying an argument similar to that in Step 1, we obtain
\begin{equation}\label{p2}
\frac{c(-\gamma_k + \gamma_k^2)}{d_k^{2s}} \geq f(\tilde{x}_1^k) M_{\lambda_k}(\tilde{x}^k, \tilde{t}_k) w_{\lambda_k}(\tilde{x}^k, \tilde{t}_k).
\end{equation}
If $f(\tilde{x}_1^k) \leq 0$, this leads to a contradiction, and the proof is complete. Hence, we may assume that $f(\tilde{x}_1^k) > 0$. By \eqref{p1}, we then have
\begin{eqnarray}\label{p3}
  c(1-\gamma_k ) \leq f(\tilde x_1^k)M_{\la_k}(\tilde x^k,\tilde t_k)d_k^{2s}.
\end{eqnarray}
 Therefore,   by assumption \eqref{p100} in $(F1)$ and the boundedness of $M_{\la_k}(\tilde x^k,\tilde t_k)$, we obtain
 $$\{\tilde x^k_1\}~\mbox{is bounded away from}~-\infty$$
 which, together with  $f\in C(\mathbb R^n)$ with $(\tilde x^k,\tilde t_k)\in  \overline{E_k(x^k,t_k)}$  implies that
 $$f(\tilde x_1^k)\leq c,~ d_k^{2s}\leq c,$$
for sufficiently large $k$.

Furthermore, in view of \eqref{p3}, we conclude that
\begin{eqnarray}\label{p6a}
f(\tilde x_1^k),~ M_{\la_k}(\tilde x^k,\tilde t_k)\geq C>0,
\end{eqnarray}
for sufficiently large $k$. 
Then, using the inequalities
\[
u_{\lambda_k}(\tilde{x}^k, \tilde{t}_k) < \eta_{\lambda_k}(\tilde{x}^k, \tilde{t}_k) < u(\tilde{x}^k, \tilde{t}_k)
\]
and the positivity of $u$, we obtain
\begin{equation}\label{p6}
u(\tilde{x}^k, \tilde{t}_k) \geq C > 0 \quad \text{and} \quad g\bigl(u(\tilde{x}^k, \tilde{t}_k)\bigr) \geq C > 0.
\end{equation}
Since $w_{\lambda_k}(\tilde{x}^k, \tilde{t}_k) \to 0$ as $k \to \infty$, it follows from \eqref{p6} that
\begin{eqnarray}\label{p7}
 g( u_{\la_k}(\tilde x^k,\tilde t_k))\geq C>0,
\end{eqnarray}
for sufficiently large $k$.

Next, starting from the original equation \eqref{eq3}, we refine the estimate \eqref{p2} and rewrite it as
\begin{equation}\label{p8}
\frac{c(-\gamma_k + \gamma_k^2)}{d_k^{2s}} \geq \big[ f((\tilde{x}_1^k)^{\lambda_k}) - f(\tilde{x}_1^k) \big] u_{\lambda_k}^p(\tilde{x}^k, \tilde{t}_k) 
+ f(\tilde{x}_1^k) M_{\lambda_k}(\tilde{x}^k, \tilde{t}_k) w_{\lambda_k}(\tilde{x}^k, \tilde{t}_k).
\end{equation}

Observing that $|\tilde{x}_1^k - \lambda_k| \sim d_k$, and using the continuity and monotonicity of $f$, along with \eqref{p6a}, we obtain
\begin{equation}\label{p9}
f((\tilde{x}_1^k)^{\lambda_k}) - f(\tilde{x}_1^k) \geq C > 0.
\end{equation}

Combining \eqref{p1}, \eqref{p6a}, \eqref{p7}, \eqref{p8}, and \eqref{p9}, we arrive at a contradiction for sufficiently large $k$.

Therefore, \eqref{28} is established.

\textit{Step 3.} We have already established that
\[
w_\lambda(x,t) \geq 0,\quad \forall~(x,t)\in \Sigma_\lambda\times \mathbb{R},~\forall~\lambda\in \mathbb{R}.
\]
We now proceed to show that the inequality is in fact strict, that is 
\begin{equation}\label{p4}
w_\lambda(x,t) > 0,\quad \forall~(x,t)\in \Sigma_\lambda \times \mathbb{R},~\forall~\lambda\in \mathbb{R}.
\end{equation}

Assume by contradiction that for some fixed $\lambda$, there exists a point $(\bar{x}, \bar{t}) \in \Sigma_\lambda \times \mathbb{R}$ such that
\begin{equation}\label{p0}
w_\lambda(\bar{x}, \bar{t}) = \min_{\Sigma_\lambda \times \mathbb{R}} w_\lambda(x,t) = 0.
\end{equation}

On the one hand, since $(\bar{x}, \bar{t})$ is a global minimum point, we have
\[
\partial_t^\alpha w_\lambda(\bar{x}, \bar{t}) \leq 0,\quad (-\Delta)^s w_\lambda(\bar{x}, \bar{t}) \leq 0,\quad \text{and} \quad -\Delta w_\lambda(\bar{x}, \bar{t}) \leq 0.
\]
Thus,
\begin{equation}\label{p5}
\partial_t^\alpha w_\lambda(\bar{x}, \bar{t}) + (-\Delta)^s w_\lambda(\bar{x}, \bar{t}) - \Delta w_\lambda(\bar{x}, \bar{t}) \leq 0.
\end{equation}

On the other hand, by the second equation in \eqref{eq3}, we have
\begin{align*}
&\partial_t^\alpha w_\lambda(x,t) + (-\Delta)^s w_\lambda(x,t) - \Delta w_\lambda(x,t)\\
=& \big( f(x_1^\lambda) - f(x_1) \big) g(u_\lambda(x,t)) 
 + f(x_1) \big( g(u_\lambda(x,t)) - g(u(x,t)) \big).
\end{align*}
Evaluating this at $(\bar{x}, \bar{t})$ and noting that $w_\lambda(\bar{x}, \bar{t}) = 0$, i.e., $u_\lambda(\bar{x}, \bar{t}) = u(\bar{x}, \bar{t})$, we obtain
\begin{align}\label{p8b22}
\partial_t^\alpha w_\lambda(\bar{x}, \bar{t}) + (-\Delta)^s w_\lambda(\bar{x}, \bar{t}) - \Delta w_\lambda(\bar{x}, \bar{t})
= \big( f(\bar{x}_1^\lambda) - f(\bar{x}_1) \big) g(u_\lambda(\bar{x}, \bar{t})).
\end{align}

Since $f$ is strictly increasing in $x_1$, $\bar{x}_1^\lambda > \bar{x}_1$, and $u > 0$, and assuming $g$ satisfies condition $(F2)$ (i.e., $g(u) > 0$ for $u > 0$), the right-hand side of \eqref{p8b22} is strictly positive. However, this contradicts \eqref{p5}, which implies the left-hand side is nonpositive.

Therefore, the assumption in \eqref{p0} must be false, and \eqref{p4} holds. Consequently, $u(x, t)$ is strictly increasing in the $x_1$-direction.

This concludes the proof of Theorem~\ref{mthm1}.
\end{proof}
\vspace{0.4cm}

Now that we have shown that any positive bounded solution of \eqref{eq1} is strictly increasing in the 
$x_1$-direction, we proceed to derive a contradiction based on this monotonicity. This will ultimately lead to the nonexistence of positive solutions to \eqref{eq1}.

Before proving Theorem~\ref{mthm2}, we first present a key lemma.

\begin{lemma}\label{m2m}(Maximum principle for mixed fractional time operators)
Suppose that \( u(t) \in \mathcal{L}^-_{\alpha} \cap \mathcal{L}^-_{\beta} \) satisfies
\begin{equation}\label{qq1}
\left\{
\begin{aligned}
a\, \partial_t^\alpha u(t) + b\, \partial_t^\beta u(t) - C u(t) &\geq 0, && t \in [1, T], \\
u(t) &> 0, && t \in (-\infty, 1],
\end{aligned}
\right.
\end{equation}
where \( a \geq 0 \), \( b \geq 0 \), \( C > 0 \) are constants, \( 0 < \alpha, \beta < 1 \), and \( ab \neq 0 \). Then
\begin{equation}\label{aa}
u(t) > 0, \quad \forall\, t \in [1, T].
\end{equation}
\end{lemma}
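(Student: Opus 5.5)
We argue by contradiction via a first-negative-time argument, in the spirit of the maximum principles proved earlier (Lemma \ref{lemf4}). Suppose \eqref{aa} fails. Since $u>0$ on $(-\infty,1]$, the set $\{t\in[1,T]: u(t)\le 0\}$ is nonempty. We treat $u$ as continuous on $[1,T]$, as it will be in the application to Theorem \ref{mthm2}. Let
\[
t_0:=\inf\{t\in[1,T] : u(t)\le 0\}.
\]
By continuity and positivity on $(-\infty,1]$, we get $t_0>1$, $u(t_0)=0$ and $u(t)>0$ for all $t<t_0$. At $t_0$ we evaluate each Marchaud derivative directly from the definition:
\[
\partial_t^\alpha u(t_0)=C_\alpha\int_{-\infty}^{t_0}\frac{u(t_0)-u(\tau)}{(t_0-\tau)^{1+\alpha}}\,d\tau=-C_\alpha\int_{-\infty}^{t_0}\frac{u(\tau)}{(t_0-\tau)^{1+\alpha}}\,d\tau<0 .
\]
The integral is finite because $u\in\mathcal L^-_\alpha$ near $-\infty$ and $u$ is regular near $t_0$. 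It is strictly negative because the integrand is positive on the whole interval $(-\infty,t_0)$. The same computation gives $\partial_t^\beta u(t_0)<0$.

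Next we combine the two derivatives. Since $a,b\ge0$ and $ab\neq 0$, both $a>0$ and $b>0$. The remaining term satisfies $-Cu(t_0)=0$. Therefore
\[
a\,\partial_t^\alpha u(t_0)+b\,\partial_t^\beta u(t_0)-Cu(t_0)<0,
\]
which contradicts the first line of \eqref{qq1}, since $t_0\in[1,T]$. This proves \eqref{aa}. Only one of $a,b$ being positive would actually suffice, because the other term is nonpositive anyway.

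The main subtlety is the choice of $t_0$. One could instead take a negative minimum of $u$ on $[1,T]$. At such a minimum, however, the term $-Cu$ is positive, so the sign argument breaks down unless a quantitative lower bound on $\partial_t^\alpha u$ is available. Taking the first zero avoids this entirely. The only technical points left are that $t_0$ is attained, which follows from continuity and the positivity of $u$ on $(-\infty,1]$, and that the integrals converge, which follows from the $\mathcal L^-_\alpha\cap\mathcal L^-_\beta$ assumption plus local regularity (e.g. $C^1$) of $u$ near $t_0$.
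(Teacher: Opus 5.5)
Your proposal is correct and follows essentially the same route as the paper: take the first time $\hat t$ at which $u$ vanishes, use the definition of the Marchaud derivatives to show both are strictly negative there while $-Cu(\hat t)=0$, and so contradict the inequality. Your remark that a single positive coefficient suffices is worth keeping, because the paper applies this lemma with $a=1$, $b=0$ (outside its stated hypothesis $ab\neq 0$), and your observation is exactly what justifies that use.
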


\begin{proof}
Suppose, on the contrary, that \eqref{aa} is false. Then there exists a first time \( \hat{t} \in (1, T] \) such that
\[
u(\hat{t}) = \inf_{t \in (-\infty, \hat{t}]} u(t) = 0.
\]
Using the assumption \( u(t) > 0 \) for \( t \leq 1 \) and the definition of the Caputo fractional derivatives, we have
\[
\begin{aligned}
a\, \partial_t^\alpha u(\hat{t}) + b\, \partial_t^\beta u(\hat{t}) - C u(\hat{t})
&\leq a\, \partial_t^\alpha u(\hat{t}) + b\, \partial_t^\beta u(\hat{t}) \\
&= a C_\alpha \int_{-\infty}^{\hat{t}} \frac{u(\hat{t}) - u(\tau)}{(\hat{t} - \tau)^{1+\alpha}}\, d\tau 
+ b C_\beta \int_{-\infty}^{\hat{t}} \frac{u(\hat{t}) - u(\tau)}{(\hat{t} - \tau)^{1+\beta}}\, d\tau \\
&< 0,
\end{aligned}
\]
since \( u(\hat{t}) = 0 \) and \( u(\tau) > 0 \) for all \( \tau < \hat{t} \). This contradicts the first inequality in \eqref{qq1}.

Therefore, the conclusion \eqref{aa} must be true, and the proof is complete.
\end{proof}

Now we begin with the proof of Theorem \ref{mthm2}.

\begin{proof}[Proof of Theorem \ref{mthm2}.] 
Suppose, for the sake of contradiction, that \eqref{eq1} admits a global positive bounded solution \( u \).

Consider the first eigenvalue problem
\begin{equation*}
\begin{cases}
(-\Delta)^s \varphi(x) - \Delta \varphi(x) = \lambda_1 \varphi(x), & x \in B_1(R+2, 0'), \\
\varphi(x) = 0, & x \in \mathbb{R}^n \setminus B_1(R+2, 0'),
\end{cases}
\end{equation*}
where the large parameter \( R \geq 0 \) will be determined later.

To perform integration by parts, we mollify \( \varphi(x) \) to obtain \( \varphi_1(x) = \rho \ast \varphi(x) \in C_0^\infty(\mathbb{R}^n) \), where \( \rho \in C_0^\infty(B_1(0)) \) is the standard mollifier satisfying \( \int_{\mathbb{R}^n} \rho(x)\, dx = 1 \).

It is clear that the support of \( \varphi_1 \) is contained in \( B_2(R+2, 0') \). Moreover, by Lemma~\ref{mollification}, we have
\begin{equation}\label{Appen}
(-\Delta)^s \varphi_1(x) - \Delta \varphi_1(x) \leq \lambda_1 \varphi_1(x), \quad \text{for all } x \in \mathbb{R}^n.
\end{equation}

We may assume that 
\begin{equation}\label{po0}
\int_{\mathbb R^n} \varphi_1(x)dx=1.
\end{equation}
Denote
\begin{equation}\label{po01}
\psi_R(t):= \int_{\mathbb R^n} u(x,t) \varphi_1(x)dx=\int_{B_2(R +2,0')} u(x, t)\varphi_1 (x)dx.
\end{equation}

{\textit{Case 1.}} When $ \alpha=1,$ by  Remark \ref{mrem-inter}, equation \eqref{eq1}, conditions (F1) and (F2), $\operatorname{supp} \varphi_1\subset B_2(R +2,0')$, and \eqref{Appen}, we obtain
\begin{eqnarray}\label{econ}
\frac{d }{dt}\psi_R(t)&=& -\int_{\mathbb{R}^n}((-\Delta)^su(x,t)-\lap u(x,t))\varphi_1 (x)dx + \int_{\mathbb{R}^n}f( x_1)  g(u (x,t)) \varphi_1 (x)dx\nonumber\\
&=&-\int_{\mathbb{R}^n}u(x,t)((-\Delta)^s \varphi_1 (x)-\lap \varphi_1(x))dx + \int_{B_2(R +2,0')} f(x_1) g(u(x,t))\varphi_1 (x)dx\nonumber\\
&\geq &- \lambda_1\int_{\mathbb R^n}u(x,t)\varphi_1 (x)dx +   \int_{B_2(R +2,0')} f(x_1 ) g(u(x,t))\varphi_1 (x)dx\nonumber\\
&\geq & - \lambda_1 \psi_R(t) +   \frac{  f_R }{M}  \underset{B_2(0)\times[1,T]}{\inf} g(u(x,t))\int_{B_2(R +2,0')}  u(x,t)\varphi_1 (x)dx \nonumber\\
&=:&(-\la_1+\frac{f_R}{M}m_T)  \psi_R (t),
\end{eqnarray}
where $~M:=\underset{\mathbb R^n \times \mathbb R}{\sup}u,~m_T=\underset{B_2(0)\times[1,T]}{\inf} g(u(x,t)),~f_R \rightarrow +\infty $ as $R\rightarrow \infty$ and we have used the fact that the strictly increasing of $u$ with respect to $x_1$.
 Since \( u(x,t) \) is strictly increasing in the \( x_1 \)-direction by Theorem~\ref{mthm1}, it follows that for any fixed \( t \in \mathbb{R} \), the function \( \psi_R(t) \) is monotone increasing with respect to \( R \). Therefore,
\begin{eqnarray}\label{epsi0}
\psi_R(0)\geq 2C_0:=\psi_0(0).
\end{eqnarray}
If \( t \geq 0 \) satisfies \( \psi_R(t) \geq C_0 \), then, since 
\[
\lim_{x_1 \to +\infty} f(x_1) = +\infty,
\]
by choosing \( R \) sufficiently large, we have
$$-\la_1+\frac{f_R}{M}m_T \geq 1. $$
Then   \eqref{econ} implies that
$$
\frac{d }{dt}\psi_R(t) \geq  \psi_R(t) .
$$
Thus,  from \eqref{epsi0},  we deuce
$$
\psi_R(t) \geq 2C_0 e^t.
$$
Next we verify  the condition 
\begin{eqnarray}\label{condition}
\psi_R(t) \geq C_0, \,\, \forall ~t \geq 0
\end{eqnarray}
by  contradiction.
Suppose that \eqref{condition} does not hold, then there exists $t_0 >0$ such that
$$
\psi_R(t_0)=C_0 \,\, \mbox{ and }\,\, \psi_R(t)>C_0 \,\, \mbox{ in }\,\, [0, t_0).
$$
It follows that $\psi_R(t) \geq 2C_0 e^t \geq 2C_0$ in $[0, t_0),$  which contradicts $\psi_R(t_0)=C_0.$ Hence, \eqref{condition} must hold.

Therefore, $\psi_R(t)$ is monotone increasing with respect to $t,$ and
 $$
 \psi_R(t) \geq 2C_0 e^t,\,\, \forall~ t \geq 0.
 $$
Consequently,
$$
\psi_R(t) \to +\infty, \,\, \mbox{as} \,\, t\to +\infty,
$$
which contradicts the boundedness of $u(x, t).$

{\textit{Case 2.}} When $0<\alpha<1,$ we define $$ T=\left(\frac{M}{c_1}\right)^\theta,$$
where
\begin{equation}\label{0qo}
\theta>0, ~M:=\underset{\mathbb R^n \times \mathbb R}{\sup}u ~\mbox{ and }~ 2c_1:=\underset{B_2(R+2,0')\times [0,1]}{\inf} u.
\end{equation}

By equation \eqref{eq1}, conditions (F1) and (F2), $u(x,\cdot)\in C^1(\mathbb R),\, \operatorname{supp} \varphi_1\subset B_2(R +2,0')$, Remark \ref{mrem-inter} and \eqref{Appen}, similarly to \eqref{econ}, we conclude that for any $t\in [1,T],$
\begin{eqnarray}\label{con}
\partial_t^\alpha \psi_R(t)&=& \int_{\mathbb R^n} \partial_t^\alpha u(x,t) \varphi_1(x) dx\nonumber\\
&\geq & - \lambda_1 \psi_R(t) +   \frac{  f_R }{M}  \underset{B_2(0)\times[1,T]}{\inf} g(u(x,t))\int_{B_2(R +2,0')}  u(x,t)\varphi_1 (x)dx \nonumber\\
&=:&\left(-\la_1+\frac{f_R}{M}m_T\right)  \psi_R(t),
\end{eqnarray}
where 
\[
m_T = \inf_{B_2(0) \times [1, T]} g(u(x,t)), \quad f_R \to +\infty \quad \text{as } R \to \infty,
\]
and we have used the fact that \( u \) is strictly increasing with respect to \( x_1 \).

Next, we construct an increasing sub-solution for the linear operator \( \partial_t^\alpha - \hat{C}_\alpha \) for some constant \( \hat{C}_\alpha > 0 \).

Set
\[
w(t) = t^\theta,
\]
where \( 0 < \theta = \frac{1}{2k+1} < \alpha \) for some integer \( k \). Direct calculation yields
\begin{equation}\label{0p}
\begin{cases}
\partial_t^\alpha w(t) = \hat{C}_\alpha t^{\theta - \alpha} \leq \hat{C}_\alpha w(t), & t \in [1, +\infty), \\
w(t) \leq 0, & t \in (-\infty, 0], \\
w(t) \leq 1, & t \in [0,1].
\end{cases}
\end{equation}

Note that \(\lim_{x_1 \to +\infty} f(x_1) = +\infty\). Hence, we may choose \( R \) sufficiently large so that
\begin{equation}\label{0o}
-\lambda_1 + \frac{f_R}{M} m_T > \hat{C}_\alpha.
\end{equation}

By the monotonicity of \( u(x,t) \) with respect to \( x_1 \) established in Theorem~\ref{mthm1}, and combining \eqref{po0}, \eqref{po01}, and \eqref{0qo}, we conclude
\begin{equation}\label{qpo}
\psi_R(t) \geq 2 c_1, \quad t \in [0,1].
\end{equation}

Define
\[
v(t) = \psi_R(t) - c_1 w(t).
\]
Then, by \eqref{con}, \eqref{0p}, \eqref{0o}, and \eqref{qpo}, the function \( v(t) \) satisfies
\[
\begin{cases}
\partial_t^\alpha v(t) - \hat{C}_\alpha v(t) \geq 0, & t \in [1, T], \\
v(t) > 0, & t \in (-\infty, 1].
\end{cases}
\]

Applying the maximum principle (Lemma~\ref{m2m}) with \( a=1 \) and \( b=0 \), it follows that
\[
v(t) > 0, \quad \text{for all } t \in [1, T].
\]
That is,
\[
\psi_R(t) > c_1 t^\theta, \quad t \in [1, T].
\]

Evaluating at \( t = T \) and using \eqref{0qo}, we obtain
\[
\psi_R(T) > M,
\]
which contradicts 
\[
\psi_R(t) \leq M,
\]
where the last inequality follows from \eqref{po0} and \eqref{po01}.

This completes the proof of Theorem \ref{mthm2}.
\end{proof}

\bigbreak
\bigbreak
\bigbreak

\textbf{Acknowledgments}
P. Wang is supported by the   National Natural Science Foundation of China (No. 12101530), Sponsored by  the Program for Science \& Technology Innovation Talents in Universities of Henan Province (No. 26HASTIT040), Scientific and Technological Key Projects of Henan Province (No.232102310321) and Nanhu Scholars Program for Young Scholars of XYNU.

L. Wu is partially supported by National Natural Science Foundation of China (Grant No. 12401133) and the Guangdong Basic and Applied Basic Research Foundation (2025B151502069).

\vspace{2mm}

\textbf{Conflict of interest.} The authors do not have any possible conflict of interest.

\vspace{2mm}

\textbf{Data availability statement.}
 Data sharing not applicable to this article as no data sets were generated or analysed during the current study.

\end{document}